\newcommand{\ent}{{\mathbb{Z}}}
\newcommand{\real}{{\mathbb{R}}}
\newcommand{\comp}{{\mathbb{C}}}
\newcommand{\quat}{{\mathbb{H}}}
\newcommand{\dist}{{\mathcal{H}}}
\newcommand{\ds}{\displaystyle}
\newtheorem{theo}{Theorem}
\newtheorem{coro}{Corollary}
\newtheorem{lemma}{Lemma}
\newtheorem{defi}{Definition}
\newtheorem{prop}{Proposition}
\newcommand{\spn}{{\rm{span}}}
\title[Sub-Riemannian odd dimensional spheres]{Sub-Riemannian geodesics and heat operator on odd dimensional spheres}
\author[Mauricio Godoy M., Irina Markina]{Mauricio Godoy Molina\\ Irina Markina}
\address{Department of Mathematics, University of Bergen, Norway.}
\email{mauricio.godoy@math.uib.no}
\address{Department of Mathematics, University of Bergen, Norway.}
\email{irina.markina@uib.no}
\thanks{The authors are partially supported by the grant of the
Norwegian Research Council \# 177355/V30, by the grant of the
European Science Foundation Networking Programme HCAA and Nordforsk
Research Network ``Analysis and Applications''.}
\subjclass[2000]{53C17, 55R25, 32V15}
\keywords{sub-Riemannian geometry, principal bundle, intrinsic sub-Laplacian, heat operator}
\begin{document}

\maketitle

\begin{abstract}

In this article we study the sub-Riemannian geometry of the spheres $S^{2n+1}$ and $S^{4n+3}$, arising from the principal $S^1-$bundle structure defined by the Hopf map and the principal $S^3-$bundle structure given by the quaternionic Hopf map respectively. The $S^1$ action leads to the classical contact geometry of $S^{2n+1}$, while the $S^3$ action gives another type of sub-Riemannian structure, with a distribution of corank~3. In both cases the metric is given as the restriction of the usual Riemannian metric on the respective horizontal distributions. For the contact $S^7$ case, we give an explicit form of the intrinsic sub-Laplacian and obtain a commutation relation between the sub-Riemannian heat operator and the heat operator in the vertical direction.

\end{abstract}

\section{Introduction}

One of the main objectives of classical sub-Riemannian geometry is to study manifolds which are path-connected by curves admissible in a certain sense. Admissibility refers to a constraint on the velocity vector of an absolutely continuous curve $\gamma:[0,1]\to M$, where $M$ is a smooth connected manifold. More precisely, if $\dist\subset TM$ is a smooth distribution, then $\gamma$ is admissible or horizontal if $\dot\gamma(t)\in\dist$ a.e. The distribution $\dist$ is often called horizontal distribution in the literature.

The idea of studying sub-Riemannian geometry arising from well-behaved fiber bundles was introduced by R. Montgomery in~\cite{M}, although the Riemannian analogue had been studied many decades before. The idea is the following: given a submersion $\pi:Q\to M$ between two Riemannian manifolds $Q$ and $M$, where $\dim M<\dim Q$, define a ``horizontal'' distribution over $Q$ by the pull-back bundle $\pi^*(TM)$ of the tangent bundle of~$M$ via~$\pi$. In the case when we have a principal $G-$action over $Q$ preserving the fibers of the submersion, the manifold $M$ can be identified with the orbits of the action and, after some technical assumptions, it is possible to obtain an explicit characterization of sub-Riemannian geodesics.

The aim of the present article is to describe the sub-Riemannian geometry of two sub-Riemannian structures for odd-dimensional spheres. More specifically, we study the sub-Riemannian geometry arising from the contact distribution for the spheres $S^{2n+1}$ with metric given as a restriction of the usual Riemannian metric, and the one arising from the quaternionic Hopf fibration for the spheres $S^{4n+3}$.

This article is organized as follows. In Section 2, we give some standard definitions of sub-Riemannian geometry which will be needed in the rest of the paper. In Section 3 we give an explicit description of sub-Riemannian geodesics in spheres $S^{2n+1}$ endowed with the standard contact distribution and we study some of their geometric properties. In Section 4 we use the obtained form of geodesics in the case of $S^3$ to give another interpretation to a result by Hurtado and Rosales in~\cite{HR}. With this new point of view, we are able to extend their result to contact spheres of an arbitrary odd dimension. Section 5 is the analogue to Sections 3 and 4 for the case of spheres of the form $S^{4n+3}$ endowed with a distribution of corank 3. Section 6 is somewhat different technically, but it is in spirit related to the core of this article. It deals with a geodesic differential equation for the quaternionic $\mathbb{H}-$type group studied in~\cite{CM}, obtained generalizing the techniques in~\cite{RitRos}. The reason for studying this equation here is to pose the question of a similar equation for the case of $S^7$ and a distribution of rank 4. Section 7 consists of the construction of the intrinsic sub-Laplacian for $S^7$. The main result states that it is the sum of the squares of an orthonormal basis of the horizontal distribution. Finally, Section 8 employs the previous construction to obtain a simple form of the heat operator for $S^7$ in a similar way as obtained in~\cite{BB}.

\section{Preliminaries and notations}

\subsection{Sub-Riemannian geometry}

Let us first give some general definitions, which will be adapted to our purposes when it will be necessary. Let $M$ be a smooth connected manifold of dimension $n$, together with a smooth distribution $\dist\subset TM$ of rank $k$, $2\leq k<n$. The manifolds of our interest are endowed with distributions satisfying the bracket generating condition, {\em i.e.} distributions whose Lie hull equals the full tangent bundle of $M$. To be more precise, define inductively the vector bundles
$$\dist^1=\dist,\quad\quad\dist^{r+1}=[\dist^r,\dist]+\dist^r\quad\mbox{for }r\geq1,$$
which naturally induce the flag
$$\dist=\dist^1\subseteq\dist^2\subseteq\dist^3\subseteq\ldots.$$
We say that $\dist$ is bracket generating if for all $x\in M$ there is an $r(x)\in\ent^+$ such that
\begin{equation}\label{brgen}
\dist_x^{r(x)}=T_xM.
\end{equation}
If the dimensions $\dim\dist^r_x$ do not depend on $x$ for any $r\geq 1$, we say that $\dist$ is a regular distribution. The least $r$ such that~\eqref{brgen} is satisfied is called the step of $\dist$. In this paper we will focus on regular distributions of step 2.

A natural question to pose is, given $M$ and $\dist$, whether one can join any two points of $M$ via a horizontal curve, {\em i.e.} an absolutely continuous curve $\gamma:[0,1]\to M$ which satisfies $\dot\gamma(t)\in\dist$ almost everywhere. A complete answer to this question was given in \cite{Suss}, which shows a deep generalization the celebrated Chow-Rashevski{\u{\i}} theorem, see \cite{Chow, R}, that gives a sufficient condition and can be stated as follows:

\begin{theo}\label{Chow}
Let $M$ be a connected manifold and $\dist\subset TM$ be a bracket generating distribution, then the set of points that can be connected to $p\in M$ by a horizontal path coincides with $M$.
\end{theo}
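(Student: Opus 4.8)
The plan is to prove the statement by a standard connectivity argument, reducing a global reachability claim to a local one. For $p\in M$ let $\mathcal{O}_p$ denote the set of points reachable from $p$ by a horizontal path. Since horizontal paths can be reparametrized, concatenated, and run backwards while staying absolutely continuous with velocity in $\dist$, the relation ``$q\in\mathcal{O}_p$'' is an equivalence relation, so $M$ is partitioned into the sets $\mathcal{O}_p$. If each $\mathcal{O}_p$ is open, then each is also closed (its complement being a union of other such sets), and connectedness of $M$ forces $\mathcal{O}_p=M$ for every $p$. Thus it suffices to show that $\mathcal{O}_p$ contains a neighbourhood of $p$.

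For the local claim I would replace general horizontal curves by finite concatenations of integral-curve segments of vector fields with values in $\dist$. Fix a local frame $X_1,\dots,X_k$ of $\dist$ near $p$ and write $\phi^X_t$ for the local flow of a vector field $X$. The key device is, for each vector field $Y$ obtained as an iterated Lie bracket of the $X_j$, the construction of a one-parameter family $\gamma^Y_t$ of local diffeomorphisms, defined for $t$ near $0$, such that each $\gamma^Y_t$ is a finite composition of maps $\phi^{X_j}_s$ (so that $t\mapsto\gamma^Y_t(x)$ is a horizontal path through $x$), continuous in $t$, with $\gamma^Y_0=\mathrm{id}$ and
$$\frac{d}{dt}\Big|_{t=0}\gamma^Y_t(x)=Y(x).$$
For a single bracket $Y=[X_a,X_b]$ one takes, for $t\ge 0$,
$$\gamma^Y_t=\phi^{X_b}_{-\sqrt t}\circ\phi^{X_a}_{-\sqrt t}\circ\phi^{X_b}_{\sqrt t}\circ\phi^{X_a}_{\sqrt t},$$
and the symmetric composition for $t<0$; the $\sqrt t$ reparametrization makes the second-order terms cancel, leaving a term linear in $t$ with coefficient $[X_a,X_b](x)$, so $t\mapsto\gamma^Y_t(x)$ is $C^1$ at $0$ with the stated derivative. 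For longer brackets one iterates this construction, substituting already-built families $\gamma^Z_s$ for the elementary flows.

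To finish, use the bracket generating hypothesis \eqref{brgen} at the point $x=p$: there exist iterated brackets $Y_1,\dots,Y_n$ of $X_1,\dots,X_k$ (with $n=\dim M$) whose values $Y_1(p),\dots,Y_n(p)$ form a basis of $T_pM$. Define
$$\Phi(t_1,\dots,t_n)=\bigl(\gamma^{Y_n}_{t_n}\circ\cdots\circ\gamma^{Y_1}_{t_1}\bigr)(p)$$
for $(t_1,\dots,t_n)$ in a neighbourhood of the origin. Then $\Phi$ is $C^1$, $\Phi(0)=p$, and $\partial_{t_i}\Phi(0)=Y_i(p)$, so $d\Phi_0$ is invertible; by the inverse function theorem $\Phi$ maps a neighbourhood of the origin onto a neighbourhood of $p$, and since every point of the image lies in $\mathcal{O}_p$ (a composition of horizontal paths is horizontal), $\mathcal{O}_p$ is a neighbourhood of $p$, as required. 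The main obstacle is the construction in the middle paragraph, namely the Baker--Campbell--Hausdorff--type verification that the iterated commutator flows are $C^1$ in $t$ with first derivative the corresponding bracket; the topological argument of the first paragraph and the concluding inverse--function--theorem step are routine. Alternatively, one can bypass the explicit commutator flows and appeal to the Sussmann--Stefan orbit theorem: the orbit through $p$ of the family of all vector fields with values in $\dist$ is an immersed submanifold whose tangent space at each point is spanned by the iterated brackets of such fields, hence equals the whole tangent space by bracket generation, so this orbit --- which is contained in $\mathcal{O}_p$ --- is open.
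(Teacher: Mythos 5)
The paper does not prove this statement at all: it is quoted as the classical Chow--Rashevski\u{\i} theorem, with the proof delegated to the references \cite{Suss}, \cite{Chow}, \cite{R}. So there is nothing in the paper to compare against line by line; what you have written is the standard textbook proof (open-and-closed orbits plus local reachability via commutator flows), and your closing remark about the Sussmann--Stefan orbit theorem is in fact exactly the route the authors take by citation. Your overall architecture is sound: the equivalence-relation/connectedness reduction is correct, and the commutator-flow construction with the $\sqrt{t}$ reparametrization is the right device.

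There is, however, one step that does not work as literally stated. The families $\gamma^{Y}_t$ are only continuous in $t$ and differentiable at $t=0$; they are not $C^1$ on a neighbourhood of $0$ (the $\sqrt{t}$ makes the $t$-derivative blow up or fail to be continuous away from the origin, and the problem compounds when you iterate the construction for higher brackets). Consequently the map $\Phi(t_1,\dots,t_n)$ is continuous and differentiable at the origin with invertible differential, but it is not $C^1$ there, so the classical inverse function theorem cannot be invoked. The conclusion you want --- that $\Phi$ maps every neighbourhood of $0$ onto a neighbourhood of $p$ --- is still true, but it has to come from a topological argument (Brouwer degree or invariance of domain applied to a continuous map differentiable at a single point with invertible differential), or from a reorganization of the proof that only ever composes genuine flows and argues by induction on the step, as in Montgomery's book. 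You half-acknowledge the difficulty by calling the middle paragraph ``the main obstacle,'' but the specific failure is in the final step, not only in the BCH computation; either replace ``inverse function theorem'' by the degree-theoretic openness lemma, or fall back entirely on the orbit theorem, which closes the argument cleanly and is what the paper itself relies on.
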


\paragraph{\bf{Remark:}} A slightly more general version of Theorem~\ref{Chow} states that, if $M$ is not connected, then the set of points that can be connected to $p\in M$ by a horizontal path is the connected component containing $p$. Since we assumed the manifold to be connected, the general formulation is unnecessary.

After these preliminaries, we are ready to specify the class of manifolds of our interest.

\begin{defi}
A sub-Riemannian structure over a manifold $M$ is a pair $(\dist,\langle\cdot,\cdot\rangle_{sR})$, where $\dist$ is a bracket generating distribution and $\langle\cdot,\cdot\rangle_{sR}$ is a fiber inner product defined on $\dist$. The triple $(M,\dist,\langle\cdot,\cdot\rangle_{sR})$ is called sub-Riemannian manifold.

In this context, the length of a horizontal curve $\gamma:[0,1]\to M$ is defined to be
$$\ell(\gamma):=\int_0^1\|\dot\gamma(t)\|dt,$$
where $\|\dot\gamma(t)\|^2=\langle\dot\gamma(t),\dot\gamma(t)\rangle_{sR}$ whenever $\dot\gamma(t)$ exists.
\end{defi}

This notion of length gives rise to the Carnot-Carath\'eodory distance $d(p,q)$ between two points $p,q\in M$, given by $d(p,q):=\inf\ell(\gamma)$, where the infimum is taken over all absolutely continuous horizontal curves joining $p$ to $q$. An absolutely continuous horizontal curve that realizes the distance between two points is called a horizontal length minimizer. It is clear that if $\dist$ is bracket generating then $d(p,q)$ is a finite nonnegative number.

Considering a trivializing neighborhood $U_p$ around $p\in M$ for the subbundle $\dist$, one can find a local orthonormal basis $X_1,\ldots,X_k$ with respect to $\langle\cdot,\cdot\rangle_{sR}$. The associated sub-Riemannian Hamiltonian is given by
$$H(q,\lambda)=\frac12\sum_{m=1}^k\lambda(X_m(q))^2,$$
where $(q,\lambda)\in T^*U_p$. A normal geodesic corresponds to the projection to $U_p\subset M$ of the solution of the Hamiltonian system
\begin{eqnarray*}
\dot q_i&=&\frac{\partial H}{\partial\lambda_i}\\
\dot\lambda_i&=&-\frac{\partial H}{\partial q_i},
\end{eqnarray*}
where $(q_i,\lambda_i)$ are the coordinates in the cotangent bundle of $M$.

\paragraph{\bf{Remark:}} It is possible to define sub-Riemannian geodesics in a more general context. There are many interesting problems related to the classification of such curves, their analytic and geometric properties. In~\cite{LiuSussmann} the problem for the case of rank two distributions is studied and essentially solved. Nevertheless, in the case of step two distributions, the general notion of geodesic gives rise to two cases: curves consisting of one point and normal geodesics. Thus, normal geodesics are the only interesting case for our purposes. Note that in this case normal geodesics are local length minimizers, in the sense that any sufficiently small arc of a normal geodesic minimizes the length functional. On the other hand one of the particular features of sub-Riemannian geometry, as the sub-Riemannian Heisenberg group exemplifies, is that it is possible to find arbitrarily close points that can be joined by normal geodesics with different lengths.

\subsection{Sub-Riemannian principal bundles}

Our first goal is to recall a full characterization of normal geodesics in the case of sub-Riemannian principal bundles. As a direct application we obtain an explicit formula for the sub-Riemannian geodesics on odd-dimensional spheres, with respect to distributions of corank 1 and 3 in Sections 3 and 5 respectively. For the sake of completeness we recall some definitions and notations given in \cite{M}.

For a submersion $\pi:Q\to M$ with fiber $Q_m=\pi^{-1}(m)$ through $m\in M$, the vertical space at $q\in Q$ is given by $T_qQ_{\pi(q)}$ and it is denoted by $V_q$. In this context, an Ehresmann connection for $\pi:Q\to M$ is a distribution $\dist\subset TQ$ which is everywhere transversal to the vertical space, that is:
$$V_q\oplus\dist_q=T_qQ\quad\mbox{for every }q\in Q.$$

Let us assume that a Lie group $G$ acts on $Q$ in such a way that $\pi:Q\to M$ becomes a fiber bundle with fiber $G$. We say that the submersion $\pi$ is a principal $G-$bundle with connection ${\mathcal{H}}$ if the following conditions hold: $G$ acts freely and transitively on each fiber, the group orbits are the fibers of $\pi:Q\to M$, and the $G-$action on $Q$ preserves the connection ${\mathcal{H}}$. Observe that the second condition implies that $M$ is isomorphic to $Q/G$ and $\pi$ is the canonical projection. We will refer to the connection $\dist$ as the horizontal distribution.

For the rest of this section, let us denote the Lie algebra of $G$ by $\mathfrak{g}$, and the corresponding exponential map by $\exp_G:\mathfrak{g}\to G$.

\begin{defi}
For the principal $G-$bundle $\pi:Q\to M$, the infinitesimal generator for the group action is the map $\sigma_q:\mathfrak{g}\to T_qQ$ defined by $$\sigma_q(\xi)=\left.\frac{d}{d\epsilon}\right|_{\epsilon=0}q\exp_G(\epsilon\xi)$$
for $q\in Q$ and $\xi\in\mathfrak{g}$. If the metric $\langle\cdot,\cdot\rangle$ in $Q$ is $G-$invariant, we have a well-defined bilinear form
$${\mathbb{I}}_q(\xi,\eta)=\langle\sigma_q\xi,\sigma_q\eta\rangle\quad,\quad\xi,\eta\in{\mathfrak{g}},$$
which is called the moment of inertia tensor at $q$.
\end{defi}

The $G-$invariant Riemannian metric on $Q$ is said to be of constant bi-invariant type if its moment of inertia tensor ${\mathbb{I}}_q$ is independent of $q\in Q$. Recall also that, in the case of a principal $G-$bundle, for each $q\in Q$ the infinitesimal generator $\sigma_q$ is an isomorphism between the vertical space $V_q$ and $\mathfrak{g}$. We refer to its inverse as the $\mathfrak{g}$ valued connection one form.

With all of these at hand, we can state the main tool required in this section. This will imply almost immediately Corollaries~\ref{geodScr} and~\ref{geodSquat} which are of core importance in the present paper. The proof of the following theorem can be found in \cite{M}.

\begin{theo}[Horizontal Geodesics for Principal Bundles]\label{horgeod}
Let $\pi:Q\to M$ be a principal $G-$bundle with a Riemannian metric of constant bi-invariant type. Let $\dist$ be the induced connection, with $\mathfrak{g}$ valued connection one form~$A$. Let $\exp_R$ be the Riemannian exponential map, so that $\gamma_R(t)=\exp_R(tv)$ is the Riemannian geodesic through $q$ with velocity vector $v\in T_qQ$. Then any horizontal lift $\gamma$ of the projection $\pi\circ\gamma_R$ is a normal sub-Riemannian geodesic and is given by $$\gamma(t)=\exp_R(tv)\exp_G(-tA(v))$$
where $\exp_G:{\mathfrak{g}}\to G$ is the exponential map of $G$. Moreover, all normal sub-Riemannian geodesics can be obtained in this way.
\end{theo}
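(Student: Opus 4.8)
The plan is to deduce the statement from the reduction theory of $G$-invariant Hamiltonian systems on principal bundles developed in \cite{M}; I will isolate the two facts it rests on and then read off the explicit formula. Work on $T^*Q$, where the Riemannian Hamiltonian $H_R(q,p)=\tfrac12|p|^2$ decomposes, relative to the splitting of $T^*_qQ$ dual to $T_qQ=\dist_q\oplus V_q$, as $H_R=H_{sR}+H_V$, with $H_{sR}$ the sub-Riemannian Hamiltonian of Section~2 and $H_V(q,p)=\tfrac12\langle\mathbb{I}_q^{-1}J(q,p),J(q,p)\rangle$, where $J\colon T^*Q\to\mathfrak g^*$, $J(q,p)=p\circ\sigma_q$, is the momentum map of the lifted $G$-action. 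Since the metric is $G$-invariant, both $H_R$ and $H_{sR}$ are $G$-invariant and $J$ is conserved along both Hamiltonian flows; since the metric is of constant bi-invariant type, $\mathbb{I}_q\equiv\mathbb{I}$ is independent of $q$, so $H_V$ depends only on $J$, is likewise conserved, and is \emph{constant}, equal to $\tfrac12\langle\mathbb{I}^{-1}\mu,\mu\rangle$, on each level set $J^{-1}(\mu)$.

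Next I would perform Marsden--Weinstein reduction at a momentum value $\mu\in\mathfrak g^*$. The reduced space $J^{-1}(\mu)/G_\mu$ is symplectomorphic to $T^*M$ with the ``magnetic'' symplectic form twisted by the $\mu$-component of the curvature of $A$; under this identification $H_{sR}$ descends to the kinetic Hamiltonian $H_M$ of the quotient metric on $M$, while $H_R$ descends to $H_M+\tfrac12\langle\mathbb{I}^{-1}\mu,\mu\rangle$. The two reduced Hamiltonians differ by an additive constant, hence generate \emph{the same} reduced dynamics (Wong's equation on $M$ with charge $\mu$). Therefore the $M$-projection of the Riemannian geodesic $\gamma_R(t)=\exp_R(tv)$, whose momentum is the constant $\mu=\mathbb{I}(A(v),\cdot)$, coincides with the $M$-projection of a normal sub-Riemannian geodesic of $Q$ with the same momentum; so $c:=\pi\circ\gamma_R$ is the projection of some normal sub-Riemannian geodesic, i.e. some horizontal lift of $c$ is one. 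Because the sub-Riemannian structure on $Q$ is $G$-invariant and the horizontal lifts of $c$ form a single $G$-orbit (uniqueness of horizontal lifts plus $G$-invariance of $\dist$), \emph{every} horizontal lift of $c$ is a normal sub-Riemannian geodesic. For the ``moreover'' clause I run this backwards: given a normal sub-Riemannian geodesic $\gamma$ through $q$ with conserved momentum $\mu$, set $v=\dot\gamma(0)+\sigma_q(\mathbb{I}^{-1}\mu)$; then $\exp_R(tv)$ has momentum $\mu$ and the same initial $M$-velocity, so $\pi\circ\exp_R(tv)$ solves the same Wong equation with the same initial data and equals $\pi\circ\gamma$, whence $\gamma$ is the horizontal lift of $\pi\circ\exp_R(tv)$ through $q$.

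It then remains to recognize the horizontal lift $\gamma$ of $c$ through $q$ as $\exp_R(tv)\exp_G(-tA(v))$. Writing $\gamma(t)=\gamma_R(t)\,g(t)$ with $g(0)=e$ and using $A\circ\sigma=\mathrm{id}$ together with the equivariance $A_{xg}\circ(R_g)_*=\mathrm{Ad}_{g^{-1}}\circ A_x$, differentiation gives $A(\dot\gamma)=\mathrm{Ad}_{g^{-1}}\bigl(A(\dot\gamma_R)\bigr)+g^{-1}\dot g$, so horizontality $A(\dot\gamma)\equiv0$ is the equation $g^{-1}\dot g=-\mathrm{Ad}_{g^{-1}}\bigl(A(\dot\gamma_R(t))\bigr)$ in $\mathfrak g$. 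But $A(\dot\gamma_R(t))$ is constant: each $\sigma(\xi)$ is a Killing field for the $G$-invariant metric, so Noether's theorem makes $\langle\dot\gamma_R(t),\sigma_{\gamma_R(t)}(\xi)\rangle=\mathbb{I}_{\gamma_R(t)}(A(\dot\gamma_R(t)),\xi)$ constant in $t$ for every $\xi$, and since $\mathbb{I}$ is a fixed nondegenerate form this forces $A(\dot\gamma_R(t))\equiv A(v)$. With $\xi_0=A(v)$ the equation reduces to $\dot g=-\xi_0 g$ (matrix notation), whose solution with $g(0)=e$ is $g(t)=\exp_G(-tA(v))$, giving $\gamma(t)=\exp_R(tv)\exp_G(-tA(v))$. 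Any other horizontal lift of $c$ is a right translate $\gamma\cdot h=\exp_R(t\,(R_h)_*v)\exp_G(-t\,\mathrm{Ad}_{h^{-1}}A(v))$, again of the asserted form.

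The main obstacle is the second paragraph: showing cleanly that normal sub-Riemannian geodesics of $Q$ and Riemannian geodesics of $Q$ project to \emph{the same} family of base curves and that the correspondence is exactly bookkept by the momentum $\mu$. This is the substance of Montgomery's analysis in \cite{M}; the alternative of verifying the theorem by directly plugging $\gamma(t)=\exp_R(tv)\exp_G(-tA(v))$ into the Hamiltonian normal-geodesic equations on $T^*Q$ still relies on the decomposition $H_R=H_{sR}+H_V$ and the conservation of $J$, and so does not really sidestep the difficulty.
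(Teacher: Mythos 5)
Your argument is correct and is essentially the proof the paper itself relies on: the paper offers no proof of Theorem~\ref{horgeod}, deferring entirely to Montgomery~\cite{M}, and your route --- the orthogonal splitting $H_R=H_{sR}+H_V$ with $H_V$ a function of the conserved momentum map (hence constant on each level set by the constant bi-invariant hypothesis), so that the two Hamiltonians generate the same projected dynamics on $M$, followed by the direct integration of the horizontality ODE $g^{-1}\dot g=-\mathrm{Ad}_{g^{-1}}A(\dot\gamma_R)$ using Noether's theorem to see that $A(\dot\gamma_R(t))\equiv A(v)$ --- is precisely the argument in that reference. The only imprecision is calling $J^{-1}(\mu)/G_\mu$ symplectomorphic to a magnetic $T^*M$ for nonabelian $G$ (one should reduce at the coadjoint orbit, obtaining the Wong phase space fibered over $T^*M$), but this is cosmetic: all that is used is that $H_V$ is constant on the invariant set $J^{-1}(\mathcal O_\mu)$, which bi-invariance of $\mathbb{I}$ guarantees.
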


\paragraph{\bf{Remark:}} In Theorem~\ref{horgeod}, the sub-Riemannian geodesics are considered with respect to the metric induced by restricting $\langle\cdot,\cdot\rangle$ to $\dist$. Recall that constant bi-invariant metrics must be $G-$invariant.

\section{Sub-Riemannian Geodesics on $S^{2n+1}$}

In the case of odd dimensional spheres $S^{2n+1}$, embedded as the boundary of the unit ball in $\comp^{n+1}$, there is a natural action of $S^1\cong SU(1)$ on it, via componentwise multiplication by a complex number of norm 1. This action induces the well known Hopf fibration $S^1\to S^{2n+1}\to\comp P^n$, which forms a principal $S^1-$bundle with connection $\mathcal{H}$ given by the orthogonal complement to the vector field
\begin{equation}\label{defV}
V_{n+1}(p)=-y_0\partial_{x_0}+x_0\partial_{y_0}-\ldots-y_n\partial_{x_n}+x_n\partial_{y_n}
\end{equation}
at each $p=(x_0,y_0,\ldots,x_n,y_n)\in S^{2n+1}$, with respect to the usual Riemannian metric of $S^{2n+1}$ as embedded in $\real^{2(n+1)}\cong\comp^{n+1}$. In \cite{GM} it is shown that this distribution coincides with the holomorphic tangent space $HS^{2n+1}$ of $S^{2n+1}$ thought as an embedded CR manifold and that it also coincides with the contact distribution given by $\ker\omega$ with respect to the contact form
$$\omega=-y_0dx_0+x_0dy_0-\ldots-y_ndx_n+x_ndy_n.$$
Note that the components of the vector $V_{n+1}(p)$ are the same as in the $\mathfrak{su}(1)$ action $i\cdot p$.

As a direct application of Theorem~\ref{horgeod}, it is possible to describe all sub-Riemannian geodesics for the sphere $S^{2n+1}$ as a sub-Riemannian manifold equipped with connection $\mathcal{H}$ and with metric restricted from $\real^{2(n+1)}$. By the results discussed in~\cite{GM}, the holomorphic tangent space for $S^{2n+1}$ is the distribution induced by the principal $S^1-$bundle given by the Hopf fibration $S^1\to S^{2n+1}\to\comp P^n$ with $\mathfrak{su}(1)-$valued connection form $A(v)=i\langle v,V_{n+1}\rangle$, $v\in T_pS^{2n+1}$, $V_{n+1}$ denotes $V_{n+1}(p)$ and $\langle\cdot,\cdot\rangle$ stands for the standard inner product in $\real^{2(n+1)}$. Moreover, the usual Riemannian structure on $S^{2n+1}$ is of constant bi-invariant type, since we have
$$\left.\frac{d}{d\epsilon}\right|_{\epsilon=0}q\exp_{\mathfrak{su}(1)}(\epsilon\xi)=\alpha i\cdot q=\alpha V_{n+1}(q),$$
for any $q\in S^{2n+1}$ and $\xi=i\alpha\in\mathfrak{su}(1)$. Therefore, the inertia tensor is given by
$$\mathbb{I}_q(i\alpha,i\tilde\alpha)=\langle\alpha V_{n+1}(q),\tilde\alpha V_{n+1}(q)\rangle=\alpha\tilde\alpha,$$
which does not depend of the point.

By Theorem~\ref{horgeod}, we have the following result.
\begin{coro}\label{geodScr}
Let $p\in S^{2n+1}=\{(z_0,\ldots,z_n)\in\comp^{n+1}:|z_0|^2+\ldots+|z_n|^2=1\}$ and $v\in T_pS^{2n+1}$. If $\gamma_R(t)=(z_0(t),\ldots,z_n(t))$ is the great circle satisfying $\gamma_R(0)=p$ and $\dot\gamma_R(0)=v$, then the corresponding sub-Riemannian geodesic is given by
\begin{equation}\label{geodS2n+1}
\gamma(t)=\left(z_0(t)e^{-it\langle v,V_{n+1}\rangle},\ldots,z_n(t)e^{-it\langle v,V_{n+1}\rangle}\right).
\end{equation}
\end{coro}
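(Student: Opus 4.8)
The plan is to apply Theorem~\ref{horgeod} directly, using the structural facts about the Hopf fibration that were assembled in the discussion preceding the statement. First I would recall that the excerpt has already verified the two hypotheses of Theorem~\ref{horgeod} for the bundle $S^1\to S^{2n+1}\to\comp P^n$: the distribution $\dist$ is the connection induced by this principal $S^1$-bundle, the $\mathfrak{su}(1)$-valued connection one form is $A(v)=i\langle v,V_{n+1}\rangle$, and the standard Riemannian metric on $S^{2n+1}$ is of constant bi-invariant type since the inertia tensor $\mathbb{I}_q(i\alpha,i\tilde\alpha)=\alpha\tilde\alpha$ is point-independent. Hence Theorem~\ref{horgeod} applies and gives $\gamma(t)=\exp_R(tv)\exp_G(-tA(v))$, where the sub-Riemannian geodesic is considered with respect to the metric restricted from $\real^{2(n+1)}$.

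Next I would identify the two factors in this formula concretely. The Riemannian exponential map on the round sphere: $\exp_R(tv)$ is the great circle through $p$ with initial velocity $v$, which is exactly $\gamma_R(t)=(z_0(t),\ldots,z_n(t))$ by the hypothesis of the corollary. For the group factor, $G=S^1$ acts on $S^{2n+1}\subset\comp^{n+1}$ by componentwise multiplication by a unit complex number, and under the identification $\mathfrak{su}(1)\cong i\real$ the exponential map is $\exp_G(i\theta)=e^{i\theta}$. Since $A(v)=i\langle v,V_{n+1}\rangle$ is a fixed element of $\mathfrak{su}(1)$ (here $\langle v,V_{n+1}\rangle$ is the real scalar $\langle v, V_{n+1}(p)\rangle$), we get $\exp_G(-tA(v))=e^{-it\langle v,V_{n+1}\rangle}$, acting on the point $\gamma_R(t)$ by scaling every complex coordinate by this unit factor. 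Putting the two pieces together yields precisely~\eqref{geodS2n+1}.

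The only genuinely nontrivial point worth spelling out is that the quantity $\langle v, V_{n+1}\rangle$ appearing in the exponent is constant in $t$ — i.e.\ that the connection form is evaluated once, at the initial data, and not along the curve. This is built into the statement of Theorem~\ref{horgeod}, where $A(v)$ with $v=\dot\gamma_R(0)\in T_qQ$ is a single Lie algebra element, so there is nothing further to prove; but it is the step a careful reader will want to see acknowledged, together with the remark that the resulting $\gamma$ is indeed horizontal and is the horizontal lift of $\pi\circ\gamma_R$. Everything else is the substitution of the explicit descriptions of $\exp_R$, $\exp_G$, and $A$ into the general formula, so I do not anticipate any real obstacle.
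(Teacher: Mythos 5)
Your proposal is correct and follows exactly the route the paper takes: the paragraphs preceding the corollary verify that the Hopf fibration is a principal $S^1$-bundle with connection form $A(v)=i\langle v,V_{n+1}\rangle$ and that the round metric is of constant bi-invariant type, and the corollary is then read off from Theorem~\ref{horgeod} by substituting $\exp_R(tv)=\gamma_R(t)$ and $\exp_G(-tA(v))=e^{-it\langle v,V_{n+1}\rangle}$ acting componentwise. Your additional remark that $A(v)$ is a fixed Lie algebra element determined by the initial velocity, so the exponent is linear in $t$, is a fair point of care but is indeed already built into the statement of the theorem.
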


In order to analyze in more details formula~\eqref{geodS2n+1}, let us introduce some notations and the necessary setup. Recall that the Riemannian geodesic starting at $p\in S^n$ with velocity $v\in T_pS^n$ of any sphere $S^n$ as a submanifold of $\real^{n+1}$, with the standard Riemannian structure, is given by:
\begin{equation}\label{greatcircle}
\gamma_R(t)=p\cos(\|v\|t)+\frac{v}{\|v\|}\sin(\|v\|t),
\end{equation}
where $\|v\|^2=\langle v,v\rangle$. In the case of our interest, a great circle $\gamma_R(t)$ in $S^{2n+1}$ as a submanifold of $\real^{2(n+1)}\cong\comp^{n+1}$ will be written in complex notation as $\gamma_R(t)=(z_0(t),\ldots,z_n(t))$. For notational simplicity, the action of $\lambda\in S^1$ over $(p_0,\ldots,p_n)\in S^{2n+1}$ is denoted by $\lambda\cdot p=(\lambda p_0,\ldots,\lambda p_n)$. Let us write $\gamma(0)=\gamma_R(0)=p=(a_0+ib_0,\ldots,a_n+ib_n)\in S^{2n+1}$ and $\dot\gamma_R(0)=v=(\alpha_0+i\beta_0,\ldots,\alpha_n+i\beta_n)\in T_pS^{2n+1}$. Observe that $V_{n+1}(\gamma(t))=i\cdot\gamma(t)$. As above, $V_{n+1}=V_{n+1}(\gamma(0))$.

\paragraph{\bf{Remark:}} In the subsequent calculations, the notation $\langle\cdot,\cdot\rangle_H$ will denote the standard Hermitian product in $\comp^{n+1}$. We recall that the standard inner product $\langle\cdot,\cdot\rangle$ in $\real^{2(n+1)}$ satisfies $${\rm{Re}}\,\langle\cdot,\cdot\rangle_H=\langle\cdot,\cdot\rangle.$$

Theorem~\ref{horgeod} assures that $\gamma$ is a horizontal curve, {\em i.e.} $\langle\dot\gamma(t),V_{n+1}(\gamma(t))\rangle=0$, nevertheless it is possible to check directly this by straightforward calculations. Since some of the computations will appear later, it is convenient to write them down. First notice that
\begin{eqnarray*}
\langle\dot\gamma(t),V_{n+1}(\gamma(t))\rangle_H&=&\langle(-i\langle v,V_{n+1}\rangle\gamma_R(t)+\dot\gamma_R(t))e^{-i\langle v,V_{n+1}\rangle t},\\
&&ie^{-i\langle v,V_{n+1}\rangle t}\gamma_R(t)\rangle_H\\
&=&-\langle v,V_{n+1}\rangle\langle\gamma_R(t),\gamma_R(t)\rangle_H-i\langle\dot\gamma_R(t),\gamma_R(t)\rangle_H\\
&=&-\langle v,V_{n+1}\rangle-i\langle\dot\gamma_R(t),\gamma_R(t)\rangle_H.
\end{eqnarray*}
Thus the problem is now to determine the value of $$\langle\dot\gamma_R(t),\gamma_R(t)\rangle_H=\sum_{k=0}^n\dot z_k(t)\overline{z_k(t)}.$$
By straightforward calculations, it is easy to see that
\begin{eqnarray}
\sum_{k=0}^n\dot z_k(t)\overline{z_k(t)}&=&(\cos^2(\|v\|t)-\sin^2(\|v\|t))\sum_{k=0}^n(a_k\alpha_k+b_k\beta_k)+\nonumber\\
&&+i\sum_{k=0}^n(a_k\beta_k-b_k\alpha_k)\nonumber\\
&=&\langle p,v\rangle\cos(2\|v\|t)+i\langle v,V_{n+1}\rangle\nonumber\\
&=&i\langle v,V_{n+1}\rangle,\label{hermgammavel}
\end{eqnarray}
yielding to $\langle\dot\gamma(t),V_{n+1}(\gamma(t))\rangle_H=0$, which implies the horizontality of the curve $\gamma(t)$.

Let us now address the problem of connecting two points in $S^{2n+1}$ by sub-Riemannian geodesics. We know by Theorem~\ref{Chow} that it is possible to find a horizontal curve $\Gamma:[0,T]\to S^{2n+1}$ such that
\begin{equation}\label{horS}
\Gamma(0)=p\quad\mbox{and}\quad\Gamma(T)=q,
\end{equation}
for any pair $p,q\in S^{2n+1}$ and all fixed time parameter $T>0$. A natural question to ask is whether $\Gamma$ can be taken as a geodesic in~\eqref{horS}. Due to the complexity of the problem, we will give a partial answer to it. It is important to remark that Proposition~\ref{cpn} is a direct analogue of the result obtained in \cite[Theorem 1]{CMV} in the particular case of $n=1$, {\em i.e.} for the three dimensional sphere.

\begin{prop}\label{cpn}
The set of sub-Riemannian geodesics arising from great circles $\gamma_R(t)$ such that $\dot\gamma_R(0)\in{\mathcal{H}}=\ker\omega$ is diffeomorphic to $\comp P^n$.
\end{prop}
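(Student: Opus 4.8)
The plan is to first pin down exactly which curves are meant, and then to exhibit an explicit diffeomorphism onto $\comp P^n$.

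\smallskip
\noindent\emph{Step 1 (reduction to horizontal great circles).} If $v:=\dot\gamma_R(0)\in\mathcal H=\ker\omega$ then $\langle v,V_{n+1}\rangle=0$ by the very definition of $\mathcal H$, so the factor $e^{-it\langle v,V_{n+1}\rangle}$ in \eqref{geodS2n+1} is identically $1$ and the sub-Riemannian geodesic $\gamma$ coincides with the great circle $\gamma_R$ itself. Conversely, the computation behind \eqref{hermgammavel} shows that a great circle $\gamma_R(t)=p\cos(\|v\|t)+\tfrac{v}{\|v\|}\sin(\|v\|t)$ is horizontal at every point exactly when $\langle v,V_{n+1}(p)\rangle=0$; since $v\in T_pS^{2n+1}$ forces $\langle v,p\rangle=0$, this amounts to the single complex equation $\langle p,v\rangle_H=0$. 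Hence, after rescaling to unit speed, the geodesics in question are precisely the circles $t\mapsto p\cos t+v\sin t$ with $|p|=|v|=1$ and $\langle p,v\rangle_H=0$, two pairs $(p,v)$ giving the same oriented curve exactly when they differ by the reparametrisation $(p,v)\mapsto(p\cos\alpha+v\sin\alpha,\,-p\sin\alpha+v\cos\alpha)$.

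\smallskip
\noindent\emph{Step 2 (the candidate map).} I would send such a geodesic to $[\,p+iv\,]\in\comp P^n$. This is well defined: from $\langle p,v\rangle_H=0$ one computes $|p+iv|^2=2\neq0$, and the reparametrisation of Step~1 multiplies $p+iv$ by $e^{-i\alpha}$, leaving its projective class fixed. The induced map $\Psi$ is smooth, coming from an obviously smooth map on the Stiefel-type manifold $\{(p,v):|p|=|v|=1,\ \langle p,v\rangle_H=0\}$.

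\smallskip
\noindent\emph{Step 3 (diffeomorphism).} To go back, given $[z]\in\comp P^n$ choose the representative with $|z|^2=2$; writing $z=p+iv$ gives $v=-i(z-p)$, and the normalisations $|p|=1$, $\langle p,v\rangle_H=0$ collapse to the single condition $\langle z,p\rangle_H=1$ on the unit sphere (with $|v|=1$ then automatic). I would check that such a $p$ always exists, giving surjectivity of $\Psi$, that the construction is smooth in $z$, and that $\Psi$ is injective; the injectivity is where the real work lies, since one must recognise exactly the symmetry under which two admissible $p$ over a fixed $z$ define the same geodesic, and verify that nothing further identifies them. Combined with Step~2 this identifies the set of geodesics with $\comp P^n$ as smooth manifolds.

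\smallskip
The only substantial obstacle is therefore the injectivity of $\Psi$, equivalently the precise fibre structure of the assignment $\gamma\mapsto[p+iv]$: one has to control completely the freedom in decomposing a vector $z$ with $|z|^2=2$ as $p+iv$ under the two normalising constraints, and match it with the reparametrisation symmetry of Step~1 together with the rotation action coming from the Hopf bundle. Surjectivity, although elementary — the affine locus $\langle z,p\rangle_H=1$ meets the unit sphere once $|z|^2=2$ — should still be recorded. By contrast, the reduction in Step~1 and the well-definedness and smoothness in Steps~2 and 3 are routine manipulations of the formulas \eqref{geodS2n+1}, \eqref{greatcircle}, \eqref{hermgammavel} and of the Hopf-bundle description recalled from \cite{GM}.
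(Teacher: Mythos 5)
Your Step 1 reproduces the paper's key observation --- when $\dot\gamma_R(0)\in\ker\omega$ the factor $e^{-it\langle v,V_{n+1}\rangle}$ is trivial and the sub-Riemannian geodesic is the great circle itself --- but from there you and the paper part ways, and your route has a genuine gap. You take the relevant set to be \emph{all} horizontal great circles, with the base point varying; the paper's proof (``any sub-Riemannian geodesic \emph{starting at} $p$\dots'') and the remark following it (the resulting $\comp P^n$ sits inside $S^{2n+1}$ transversally to $V_{n+1}$ along \emph{the fiber containing $p$}) make clear that $p$ is fixed and the geodesics are parameterized by the initial velocity $v$ ranging over the complex $n$-dimensional subspace $\mathcal H_p=(\comp p)^{\perp}$, the whole point being that the class $[v]$ then lives in a complex projective space. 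Your set --- pairs $(p,v)$ with $|p|=|v|=1$, $\langle p,v\rangle_H=0$, modulo reparameterization --- is a manifold of dimension $4n-1$, so it cannot be diffeomorphic to the $2n$-dimensional $\comp P^n$ under \emph{any} map; the deferred injectivity of Step 3 is not merely hard, it is false.

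Concretely, your own Step 3 computation already exhibits the failure: for a representative $z$ with $|z|^2=2$, the admissible base points are $p=\tfrac12 z+w$ with $w\perp_H z$ and $|w|^2=\tfrac12$, a whole sphere $S^{2n-1}$ of choices, and distinct $w$ give distinct circles. For $S^3$, take $z=(\sqrt2,0)$ and $w=(0,\pm\tfrac1{\sqrt2})$: this yields $p=(\tfrac1{\sqrt2},\tfrac1{\sqrt2})$, $v=(-\tfrac{i}{\sqrt2},\tfrac{i}{\sqrt2})$, whose circle is $\{(e^{-it}/\sqrt2,\,e^{it}/\sqrt2)\}$, and $p'=(\tfrac1{\sqrt2},-\tfrac1{\sqrt2})$, $v'=(-\tfrac{i}{\sqrt2},-\tfrac{i}{\sqrt2})$, whose circle does not even contain $p$; both satisfy $p+iv=p'+iv'=(\sqrt2,0)$. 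So the fibre of $\Psi$ over each point of $\comp P^n$ is an $S^{2n-1}$ even after dividing out reparameterization. (Restricting to circles through a fixed $p$ does make $v\mapsto[p+iv]$ injective, but then the image is only $(2n-1)$-dimensional and the map does not descend to loci, since $[p+iv]\neq[p-iv]$.) The repair is to follow the paper: fix the base point $p$ and use the assignment $\gamma\mapsto[\dot\gamma(0)]$ with $\dot\gamma(0)\in\mathcal H_p$; note that even there one should record carefully that lines in the complex $n$-dimensional space $\mathcal H_p$ form $P(\mathcal H_p)$, and check how this parameterization interacts with the identification of a circle with its reverse.
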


\begin{proof}
In this case any sub-Riemannian geodesic starting at $p\in S^{2n+1}$ with initial velocity $v\in{\mathcal{H}}\subset T_pS^{2n+1}$ coincides with the corresponding great circle, since the condition $\dot\gamma_R(0)\in{\mathcal{H}}= \ker\omega$ is equivalent to $\langle v,V_{n+1}\rangle=0$, thus
$$\gamma(t)=p\cos(\|v\|t)+\frac{v}{\|v\|}\sin(\|v\|t)$$
whose loci is uniquely determined by the point $[v]\in\comp P^n$.
\end{proof}

Observe that this $\comp P^n$ can be seen as a submanifold of $S^{2n+1}$ which is transversal to $V_{n+1}$ along the fiber containing $p$. As remarked in~\cite{CMV} for $S^3$, this can be seen as a sophisticated analogue of the horizontal space at the identity in the $(2n+1)-$dimensional Heisenberg group.

Let us conclude this discussion with an interesting result which will be of importance in the following Section. This can be thought of as a sort of Pythagoras theorem for contact spheres.

\begin{prop}\label{horvel}
For a horizontal sub-Riemannian geodesic of the form
$$\gamma(t)=\left(z_0(t)e^{-it\langle v,V_{n+1}\rangle},\ldots,z_n(t)e^{-it\langle v,V_{n+1}\rangle}\right)$$
the following equation holds
$$\|\dot\gamma(t)\|^2+\langle v, V_{n+1}\rangle^2=\|v\|^2.$$
Thus, its velocity is constant and its sub-Riemannian length for $t\in[a,b]$ is $\ell(\gamma)=(b-a)\sqrt{\|v\|^2-\langle v, V_{n+1}\rangle^2}$.
\end{prop}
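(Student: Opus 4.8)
The plan is to compute $\|\dot\gamma(t)\|^2$ directly from the explicit form of $\gamma$ and reorganize the terms. Writing $\mu := \langle v, V_{n+1}\rangle$ for brevity, we have $\gamma(t) = \gamma_R(t)e^{-i\mu t}$ componentwise, so differentiating gives $\dot\gamma(t) = \bigl(\dot\gamma_R(t) - i\mu\,\gamma_R(t)\bigr)e^{-i\mu t}$. Since the factor $e^{-i\mu t}$ is a unit complex number acting on each coordinate, it preserves the real inner product, hence $\|\dot\gamma(t)\|^2 = \|\dot\gamma_R(t) - i\mu\,\gamma_R(t)\|^2$, where the norm is the one induced by $\mathrm{Re}\langle\cdot,\cdot\rangle_H = \langle\cdot,\cdot\rangle$.

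Next I would expand this squared norm using the Hermitian structure: $\|\dot\gamma_R(t) - i\mu\,\gamma_R(t)\|^2 = \mathrm{Re}\langle \dot\gamma_R(t) - i\mu\,\gamma_R(t),\, \dot\gamma_R(t) - i\mu\,\gamma_R(t)\rangle_H$, which splits into $\|\dot\gamma_R(t)\|^2 + \mu^2\|\gamma_R(t)\|^2 + 2\mu\,\mathrm{Re}\bigl(i\langle\dot\gamma_R(t),\gamma_R(t)\rangle_H\bigr)$, up to getting the sign of the cross term right. Now $\gamma_R$ is a great circle parametrized by~\eqref{greatcircle} with $\gamma_R(0)=p$, $\dot\gamma_R(0)=v$; being a Riemannian geodesic on the unit sphere it has constant speed, so $\|\dot\gamma_R(t)\|^2 = \|v\|^2$ and $\|\gamma_R(t)\|^2 = 1$ for all $t$. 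For the cross term I would invoke the computation~\eqref{hermgammavel} established just above the statement, namely $\langle\dot\gamma_R(t),\gamma_R(t)\rangle_H = i\mu$ (using that $\langle p,v\rangle = 0$ since $v\in T_pS^{2n+1}$), so $\mathrm{Re}\bigl(i\cdot i\mu\bigr) = -\mu$ and the cross term contributes $-2\mu^2$.

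Putting the pieces together yields $\|\dot\gamma(t)\|^2 = \|v\|^2 + \mu^2 - 2\mu^2 = \|v\|^2 - \mu^2$, which is exactly the asserted identity $\|\dot\gamma(t)\|^2 + \langle v, V_{n+1}\rangle^2 = \|v\|^2$. Since the right-hand side is independent of $t$, the sub-Riemannian speed $\|\dot\gamma(t)\|$ is constant, equal to $\sqrt{\|v\|^2 - \langle v,V_{n+1}\rangle^2}$, and integrating over $[a,b]$ gives $\ell(\gamma) = (b-a)\sqrt{\|v\|^2 - \langle v, V_{n+1}\rangle^2}$, as claimed. Note that one must also check $\|v\|^2 - \mu^2 \geq 0$ for the length formula to make sense; this is automatic since it equals a squared norm, but it also follows from Cauchy--Schwarz because $V_{n+1}(p) = i\cdot p$ is a unit vector.

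The computation is essentially routine; the only place requiring care is bookkeeping the signs and the real-part extraction in the cross term, and making sure to reuse~\eqref{hermgammavel} rather than recomputing $\sum_k \dot z_k(t)\overline{z_k(t)}$ from scratch. There is no real obstacle here — the result is a direct consequence of the explicit geodesic formula from Corollary~\ref{geodScr} together with the orthogonality $v \perp p$ and the constant-speed property of great circles.
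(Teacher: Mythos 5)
Your proof is correct and follows essentially the same route as the paper's: expand the (Hermitian) squared norm of $\dot\gamma(t)=(\dot\gamma_R(t)-i\langle v,V_{n+1}\rangle\gamma_R(t))e^{-i\langle v,V_{n+1}\rangle t}$, use $\|\gamma_R\|^2=1$, $\|\dot\gamma_R\|^2=\|v\|^2$, and equation~\eqref{hermgammavel} to evaluate the cross term as $-2\langle v,V_{n+1}\rangle^2$. The only cosmetic difference is that you strip off the unimodular factor before expanding rather than letting it cancel inside the Hermitian product.
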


\begin{proof}
By straightforward calculations, we have
\begin{eqnarray}
\langle\dot\gamma(t),\dot\gamma(t)\rangle_H&=&\langle(-i\langle v,V_{n+1}\rangle\gamma_R(t)+\dot\gamma_R(t))e^{-i\langle v,V_{n+1}\rangle t}, \nonumber\\ &&(-i\langle v,V_{n+1}\rangle\gamma_R(t)+\dot\gamma_R(t))e^{-i\langle v,V_{n+1}\rangle t}\rangle_H \nonumber\\
&=&\langle v,V_{n+1}\rangle^2\langle\gamma_R(t),\gamma_R(t)\rangle_H+\langle\dot\gamma_R(t),\dot\gamma_R(t)\rangle_H \nonumber\\
&&+\langle v,V_{n+1}\rangle(i\langle\dot\gamma_R,\gamma_R\rangle_H-i\langle\gamma_R,\dot\gamma_R\rangle_H)\nonumber\\
&=&\langle v,V_{n+1}\rangle^2+\|v\|^2-2\langle v,V_{n+1}\rangle^2.\nonumber
\end{eqnarray}
Here we have used equation~\eqref{hermgammavel}. The proposition follows.
\end{proof}

\paragraph{\bf{Remark:}} According to Proposition~\ref{horvel}, the condition that a curve $\gamma(t)=e^{-it\langle v,V_{n+1}\rangle}\gamma_R(t)$ is parameterized by arclength is equivalent to require that $\|v\|^2=1+\langle v,V_{n+1}\rangle^2$.

\section{Curvature of sub-Riemannian geodesics on $S^3$}

In \cite{HR}, the authors describe the horizontal geodesics of the three dimensional sphere with respect to its contact distribution, obtaining an explicit expression for these curves. The key tool to achieve this is the following proposition.

\begin{prop}\label{hurtros}
Let $\gamma:I\to S^3$ be a $C^2$ horizontal curve parameterized by arc-length. Then $\gamma$ is a critical point of length for any admissible variation if and only if there is $\lambda\in\real$ such that $\gamma$ satisfies the second order ordinary differential equation
\begin{equation}
\nabla_{\dot\gamma}\dot\gamma+2\lambda J(\dot\gamma)=0,
\end{equation}
where $\nabla$ is the Levi-Civita connection and $J$ is the standard almost complex structure on $S^3$.
\end{prop}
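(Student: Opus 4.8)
The plan is to carry out a Lagrange‑multiplier‑type first variation of length. Fix the $C^2$ unit‑speed horizontal curve $\gamma\colon[a,b]\to S^3$ and let $\Gamma(s,t)=\gamma_s(t)$ be a variation with fixed endpoints through horizontal curves, with variation field $W=\partial_s\Gamma|_{s=0}$. First I would record the standard first variation: since $|\dot\gamma|\equiv 1$ and the endpoints are fixed, integration by parts gives
\[
\left.\frac{d}{ds}\right|_{s=0}\ell(\gamma_s)=-\int_a^b\langle\nabla_{\dot\gamma}\dot\gamma,W\rangle\,dt .
\]
Next I would pin down which fields $W$ along $\gamma$ arise from admissible variations. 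Pulling $\omega$ back by $\Gamma$ and using $d(\Gamma^*\omega)=\Gamma^*(d\omega)$ together with the horizontality $\omega(\dot\gamma_s)\equiv0$ yields the single first‑order relation
\[
\frac{d}{dt}\,\omega(W)=d\omega(\dot\gamma,W)\qquad\text{along }\gamma ;
\]
conversely, any $W$ obeying this relation and vanishing at the endpoints is the variation field of an admissible variation — this existence statement is the standard technical point, treated as in \cite{HR}.

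The geometry of $S^3$ enters through a moving frame. Along $\gamma$ the triple $\{\dot\gamma,\,J\dot\gamma,\,V\}$, where $V=V_2$ is the unit field spanning the orthogonal complement of $\mathcal{H}=\ker\omega$ (so $\omega=\langle V,\cdot\rangle$, and a direct computation from the explicit form of $\omega$ gives $d\omega(X,Y)=2\langle JX,Y\rangle$), is orthonormal: $J$ restricts to an isometry of $\mathcal{H}$ with $\langle J\dot\gamma,\dot\gamma\rangle=0$, and $V$ spans $\mathcal{H}^{\perp}$. Writing $\nabla_{\dot\gamma}\dot\gamma=a\,J\dot\gamma+b\,V$, the $\dot\gamma$‑component is absent because $|\dot\gamma|$ is constant, and
\[
b=\langle\nabla_{\dot\gamma}\dot\gamma,V\rangle=\frac{d}{dt}\langle\dot\gamma,V\rangle-\langle\dot\gamma,\nabla_{\dot\gamma}V\rangle=0 ,
\]
using horizontality of $\gamma$ and the fact that $V$ is a Killing field, since it generates the isometric $S^1$‑action. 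Expanding $W=h\,\dot\gamma+g\,J\dot\gamma+f\,V$ and inserting $\omega=\langle V,\cdot\rangle$ and $d\omega(X,Y)=2\langle JX,Y\rangle$ into the admissibility relation, it collapses to $\dot f=2g$, with $h$ and $f$ otherwise unconstrained.

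Substituting, $\langle\nabla_{\dot\gamma}\dot\gamma,W\rangle=a g+b f=\tfrac12 a\dot f$, so one more integration by parts (the boundary term dies because $f$ vanishes at the endpoints) gives $\left.\frac{d}{ds}\right|_{s=0}\ell(\gamma_s)=\tfrac12\int_a^b\dot a\,f\,dt$. Hence $\gamma$ is a critical point for every admissible variation if and only if this integral vanishes for every admissible $f$, which by the fundamental lemma of the calculus of variations forces $\dot a\equiv0$; writing the constant as $a=-2\lambda$ yields exactly $\nabla_{\dot\gamma}\dot\gamma+2\lambda J\dot\gamma=0$, and the converse follows by reading the computation backwards. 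The step I expect to be the main obstacle is the characterization of admissible variation fields — the pull‑back computation producing the constraint $\dot f=2g$ together with the accompanying existence of an actual variation realizing a prescribed such field — coupled with the observation that the tangential coefficient $h$ drops out of the first variation; this last fact is just the reparameterization invariance of length, and it is what confines the entire content of the Euler--Lagrange equation to the transverse directions $J\dot\gamma$ and $V$.
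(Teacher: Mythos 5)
Your proof is correct and follows essentially the same variational scheme that the paper itself carries out for the $\mathbf{H}^1$ analogue (Theorem~\ref{curvS7}): the first variation of length, the characterization of admissible variation fields by differentiating the horizontality constraint (your relation $\dot f=2g$ is the one-vertical-direction case of the identity preceding Lemma~\ref{admvar}, and the existence statement you defer is exactly that lemma), the observation that $\nabla_{\dot\gamma}\dot\gamma$ has neither a $\dot\gamma$- nor a $V$-component, and a fundamental-lemma argument; for Proposition~\ref{hurtros} itself the paper only cites \cite{HR,RitRos}. One small caveat: since $\gamma$ is only assumed $C^2$, the coefficient $a=\langle\nabla_{\dot\gamma}\dot\gamma,J\dot\gamma\rangle$ is merely continuous, so instead of integrating by parts to produce $\int\dot a\,f$ you should argue that $\int_a^b a\,\dot f\,dt=0$ for all $f$ vanishing at the endpoints (equivalently $\int a\,\varphi=0$ for all mean-zero $\varphi$) forces $a$ to be constant by du Bois--Reymond, which is precisely how the paper phrases the corresponding step with its mean-zero test functions $f_r$.
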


The authors call the parameter $\lambda$ above the {\em curvature} of $\gamma$, since after projecting it via the Hopf fibration, $\lambda$ becomes precisely the curvature of the projected curve in $S^2$. Note that the curves with zero curvature are precisely the horizontal great circles. It is our purpose to find an explicit expression for $\lambda$ in terms of known parameters of the sub-Riemannian geodesics of $S^3$, as presented in Corollary~\ref{geodScr}.

\begin{prop}\label{expcurv}
The curvature of the sub-Riemannian geodesic
$$\gamma(t)=e^{-i\langle v,V_2\rangle t}\gamma_R(t)$$
in $S^3$, parameterized by arc-length, equals $\langle v,V_2\rangle$.
\end{prop}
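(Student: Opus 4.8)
The plan is to verify directly that the curve $\gamma(t)=e^{-i\langle v,V_2\rangle t}\gamma_R(t)$ satisfies the ODE of Proposition~\ref{hurtros} with $\lambda=\tfrac12\langle v,V_2\rangle$, since by that proposition the curvature is the unique $\lambda$ for which $\nabla_{\dot\gamma}\dot\gamma+2\lambda J(\dot\gamma)=0$ holds. So the task reduces to computing $\nabla_{\dot\gamma}\dot\gamma$ and $J(\dot\gamma)$ along $\gamma$ and comparing. Note first that by the Remark after Proposition~\ref{horvel} the arc-length hypothesis means $\|v\|^2=1+\langle v,V_2\rangle^2$; write $c=\langle v,V_2\rangle$ for brevity.

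First I would compute the ambient (i.e. $\real^4\cong\comp^2$) second derivative $\ddot\gamma(t)$. Using $\gamma(t)=e^{-ict}\gamma_R(t)$ and $\ddot\gamma_R=-\|v\|^2\gamma_R$ (the great-circle equation), one gets
\[
\ddot\gamma(t)=e^{-ict}\bigl(-c^2\gamma_R-2ic\dot\gamma_R-\|v\|^2\gamma_R\bigr)
=-(c^2+\|v\|^2)\gamma+(-2ic)\dot\gamma-(\text{correction}),
\]
which I would organize into the tangential and normal parts relative to $S^3$. The Levi-Civita connection on $S^3\subset\real^4$ is the ambient derivative projected onto $T_{\gamma}S^3$, i.e. $\nabla_{\dot\gamma}\dot\gamma=\ddot\gamma+\|\dot\gamma\|^2\gamma$ (the normal component is $-\langle\ddot\gamma,\gamma\rangle\gamma=\|\dot\gamma\|^2\gamma$, using $\langle\dot\gamma,\gamma\rangle=0$). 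Here $\|\dot\gamma\|^2=1$ by the arc-length assumption (Proposition~\ref{horvel} with $\|v\|^2-c^2=1$). The almost complex structure $J$ on $S^3$ is multiplication by $i$ in $\comp^2$ followed by projection onto the horizontal distribution; along a horizontal curve $J(\dot\gamma)=i\dot\gamma-\langle i\dot\gamma,V_2(\gamma)\rangle V_2(\gamma)$, and since $\dot\gamma$ is horizontal this is just $i\dot\gamma$ minus its $V_2(\gamma)$-component, where $V_2(\gamma)=i\gamma$. So I would also need $\langle i\dot\gamma,i\gamma\rangle=\langle\dot\gamma,\gamma\rangle=0$ — actually the relevant quantity is $\operatorname{Re}\langle i\dot\gamma,i\gamma\rangle_H$; I'd want to recheck which component survives, but the computation~\eqref{hermgammavel} controls exactly these Hermitian pairings of $\dot\gamma_R$ with $\gamma_R$.

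Assembling: after projecting $\ddot\gamma$ tangentially I expect a term $-2ic\,e^{-ict}\dot\gamma_R$ modulo multiples of $\gamma$ and $V_2(\gamma)$, and $e^{-ict}\dot\gamma_R$ differs from $\dot\gamma$ precisely by $ic\gamma$. Thus $\nabla_{\dot\gamma}\dot\gamma$ should come out to be a multiple of $i\dot\gamma$ plus multiples of $\gamma$ and $V_2(\gamma)=i\gamma$; since $\nabla_{\dot\gamma}\dot\gamma$ is tangent to $S^3$ the $\gamma$-component must vanish, and since $\gamma$ is a geodesic we additionally know $\nabla_{\dot\gamma}\dot\gamma$ is the horizontal projection of the curvature vector, which kills the $V_2(\gamma)$-component, leaving $\nabla_{\dot\gamma}\dot\gamma=-2c\,J(\dot\gamma)$, i.e. $\lambda=c=\langle v,V_2\rangle$.

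The main obstacle is purely bookkeeping: keeping straight the three pieces (real versus Hermitian inner products, the decomposition $T_\gamma S^3 = \mathcal H_\gamma\oplus\real V_2(\gamma)$, and the ambient-vs-intrinsic covariant derivative) so that the coefficient of $J(\dot\gamma)$ is extracted with the correct sign and the factor of $2$ in $2\lambda J(\dot\gamma)$ is accounted for. A cleaner alternative, which I would mention, is to invoke Proposition~\ref{cpn} and the Hopf-fibration interpretation of $\lambda$ noted after Proposition~\ref{hurtros}: the projection of $\gamma$ to $S^2$ is a circle of geodesic curvature $\lambda$, and one can compute that curvature directly from the parametrization $\pi\circ\gamma_R$ of a great circle's image under the Hopf map, which again yields $\langle v,V_2\rangle$; but the direct ODE verification above is the most self-contained route.
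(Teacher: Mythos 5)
Your strategy---verifying the ODE of Proposition~\ref{hurtros} for the explicit curve $\gamma$---is the same as the paper's, but your computation runs along a genuinely different track. The paper expands $\dot\gamma$ in the left-invariant orthonormal frame $\{X,Y\}$ of the contact distribution, obtains explicit trigonometric coefficients $f_X,f_Y$ in $2t\langle v,V_2\rangle$, and reads off $\nabla_{\dot\gamma}\dot\gamma=-2\langle v,V_2\rangle J(\dot\gamma)$ by differentiating those coefficients; you instead compute the ambient second derivative in $\comp^2$ from $\ddot\gamma_R=-\|v\|^2\gamma_R$ and project. Your route does close: writing $c=\langle v,V_2\rangle$, one finds $\ddot\gamma=(c^2-\|v\|^2)\gamma-2ic\,\dot\gamma=-\gamma-2ic\,\dot\gamma$ (using $e^{-ict}\dot\gamma_R=\dot\gamma+ic\gamma$ and $\|v\|^2=1+c^2$), so the tangential projection is exactly $-2ic\,\dot\gamma$; and since $\dot\gamma$ is horizontal, $i\dot\gamma$ is automatically tangent and horizontal (because $\langle i\dot\gamma,\gamma\rangle=-\langle\dot\gamma,i\gamma\rangle=0$ and $\langle i\dot\gamma,i\gamma\rangle=\langle\dot\gamma,\gamma\rangle=0$), so $J(\dot\gamma)=i\dot\gamma$ with no correction term and $\nabla_{\dot\gamma}\dot\gamma=-2c\,J(\dot\gamma)$, i.e.\ $\lambda=c$. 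This is arguably cleaner than the frame computation, and it makes your appeal to ``the geodesic property kills the $V_2(\gamma)$-component'' unnecessary---which is just as well, since that step as stated is not justified (you would be assuming part of what is being verified); the direct computation shows there is no such component to kill. One genuine slip to fix: your opening sentence announces $\lambda=\tfrac12\langle v,V_2\rangle$, which contradicts both the statement of the proposition and your own (correct) conclusion $\lambda=\langle v,V_2\rangle$ at the end; the equation of Proposition~\ref{hurtros} is $\nabla_{\dot\gamma}\dot\gamma+2\lambda J(\dot\gamma)=0$, so $\nabla_{\dot\gamma}\dot\gamma=-2c\,J(\dot\gamma)$ gives $\lambda=c$, not $c/2$.
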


\begin{proof}
The Lie group structure of $S^3$ as the set of unit quaternions, induces the globally defined vector fields
\begin{equation}\label{distS3}
\begin{array}{ccl}
V(p)&=&-y_1\partial_{x_1}+x_1\partial_{y_1}-y_2\partial_{x_2}+x_2\partial_{y_2},\\
X(p)&=&-x_2\partial_{x_1}+y_2\partial_{y_1}+x_1\partial_{x_2}-y_1\partial_{y_2},\\
Y(p)&=&-y_2\partial_{x_1}-x_2\partial_{y_1}+y_1\partial_{x_2}+x_1\partial_{y_2},
\end{array}
\end{equation}
at $p=(x_1,y_1,x_2,y_2)\in S^3$, which are orthonormal with respect to the usual Riemannian structure of $\real^3$. Observe that $V(p)=V_2(p)$ as defined in~\eqref{defV}.

Let $p=(x_1,y_1,x_2,y_2)=\gamma(0)\in S^3$ be the initial point of $\gamma$ and let $v=(v_{x_1},v_{y_1},v_{x_2},v_{y_2})=\dot\gamma_R(0)\in T_pS^3$ be the initial velocity of the corresponding great circle. By direct calculation, we have
\begin{equation}\label{velXY}
\dot\gamma(t)=f_X(t)X(\gamma(t))+f_Y(t)Y(\gamma(t)),
\end{equation}
where, denoting $\alpha=\langle v,X\rangle,\beta=\langle v,Y\rangle$, we have
$$f_X(t)=\alpha\cos(2t\langle v,V\rangle)+\beta\sin(2t\langle v,V\rangle),$$
$$f_Y(t)=\beta\cos(2t\langle v,V\rangle)-\alpha\sin(2t\langle v,V\rangle).$$

It follows from this decomposition that
\begin{equation}
J(\dot\gamma(t))=-f_Y(t)X(\gamma(t))+f_X(t)Y(\gamma(t)).
\end{equation}

It remains to determine the term $\nabla_{\dot\gamma}\dot\gamma$. It is well-known that for submanifolds of $\real^n$, the vector field $\nabla_{\dot\gamma}\dot\gamma$ corresponds to the projection of the second derivative $\ddot\gamma$ to the tangent space of the submanifold. In this case, differentiating~\eqref{velXY} we obtain
\begin{eqnarray}
\nabla_{\dot\gamma}\dot\gamma&=&2\langle v,V\rangle(f_Y(t)X(\gamma(t))-f_X(t)Y(\gamma(t)))\nonumber\\
&=&-2\langle v,V\rangle\,J(\dot\gamma(t)).\nonumber
\end{eqnarray}

The proposition follows.
\end{proof}

\paragraph{\bf{Remark:}} Note that in case $p=(1,0,0,0)\in S^3$, a great circle starting at $p$ with velocity vector $v=(0,v_{y_1},v_{x_2},v_{y_2})\in T_pS^3$ is given by
$$\gamma_R(t)=\left(\cos(\|v\|t),\frac{v_{y_1}}{\|v\|}\sin(\|v\|t),\frac{v_{x_2}}{\|v\|}\sin(\|v\|t),\frac{v_{y_2}}{\|v\|}\sin(\|v\|t)\right).$$
Then, the corresponding sub-Riemannian geodesic is
\begin{equation}\label{geodS3}
\gamma(t)=e^{-iv_{y_1}t}\gamma_R(t),
\end{equation}
where $v_{x_2}^2+v_{y_2}^2=1$, since the curve is parameterized by arc-length. It follows that the curvature is given by $\langle v,V_2\rangle=v_{y_1}$.

In~\cite{HR} the problem of existence of closed sub-Riemannian geodesics is also discussed. Their result is that a complete geodesic $\gamma$ in $S^3$ parameterized by arc-length, with curvature $\lambda$ is closed if and only if $\lambda/\sqrt{1+\lambda^2}\in{\mathbb Q}$. This result can be generalized to any odd dimensional sphere.

\begin{prop}\label{clgeod}
Let $\gamma:{\mathbb R}\to S^{2n+1}$ be a complete sub-Riemannian geodesic parameterized by arc-length, with initial velocity $v\in T_pS^{2n+1}$. Then $\gamma$ is closed if and only if $$\frac{\langle v,V_{n+1}\rangle}{\sqrt{1+\langle v,V_{n+1}\rangle^2}}\in{\mathbb Q}.$$
\end{prop}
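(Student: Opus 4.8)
The plan is to write the sub-Riemannian geodesic $\gamma(t) = e^{-it\langle v, V_{n+1}\rangle}\gamma_R(t)$ explicitly and determine exactly when it returns to its starting configuration. Set $c = \langle v, V_{n+1}\rangle$ for brevity, and recall from the remark after Proposition~\ref{horvel} that arc-length parametrization forces $\|v\|^2 = 1 + c^2$. The great circle factor $\gamma_R(t) = p\cos(\|v\|t) + \frac{v}{\|v\|}\sin(\|v\|t)$ is periodic with period $T_R = 2\pi/\|v\|$, while the phase factor $e^{-itc}$ (acting diagonally on all $n+1$ complex coordinates) is periodic with period $T_V = 2\pi/|c|$ when $c \neq 0$. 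First I would dispose of the degenerate case $c = 0$: then $\gamma = \gamma_R$ is a great circle, hence closed, and indeed $0/\sqrt{1+0} = 0 \in \mathbb{Q}$, consistent with the claim.

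For $c \neq 0$, the key step is to characterize closedness correctly. The curve $\gamma$ is closed iff there exists $T > 0$ with $\gamma(T) = \gamma(0) = p$ \emph{and} $\dot\gamma(T) = \dot\gamma(0)$; since $\gamma$ solves a smooth ODE (the normal geodesic equation), matching position and velocity at one time suffices. I would argue that $\gamma(T) = e^{-iTc}\gamma_R(T)$ equals $p$ precisely when $\gamma_R(T)$ is a scalar multiple (by a unit complex number) of $p$, i.e.\ $\gamma_R(T) = e^{i\theta}p$ for some $\theta$, with $e^{-iTc}e^{i\theta} = 1$. Now $\gamma_R(T) = e^{i\theta}p$ means the great circle through $p$ (in the direction $v \notin \mathcal{H}$, so $v$ is not the vertical direction unless $p, v$ are complex-linearly dependent — one should check $\gamma_R$ is not itself a fiber circle, which holds because $v$ has a nonzero horizontal component as $\|v\|^2 = 1 + c^2 > c^2 = \langle v, V_{n+1}\rangle^2$ by Proposition~\ref{horvel} forces $\dot\gamma \neq 0$) hits the fiber $S^1 \cdot p$ again. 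The great circle spanned by $\{p, v/\|v\|\}$ (a real $2$-plane) intersects the complex line $\mathbb{C}p$ in exactly the two points $\pm p$ unless the $2$-plane is itself $\mathbb{C}p$-related; generically it meets $\mathbb{C}p$ only at $\pm p$. Thus $\gamma_R(T) = \pm p$, which happens iff $\|v\|T \in \pi\mathbb{Z}$, say $\|v\|T = k\pi$, giving $\gamma_R(T) = (-1)^k p$. Then we need $e^{-iTc}(-1)^k = 1$, i.e.\ $Tc = k\pi + 2\pi m$ for some $m \in \mathbb{Z}$ (absorbing $(-1)^k = e^{ik\pi}$), i.e.\ $Tc \equiv k\pi \pmod{2\pi}$. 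Writing $T = k\pi/\|v\|$, the condition becomes $\frac{k\pi c}{\|v\|} \in k\pi + 2\pi\mathbb{Z}$, equivalently $\frac{c}{\|v\|} \in 1 + \frac{2}{k}\mathbb{Z}$ — a rationality condition on $c/\|v\| = \frac{\langle v, V_{n+1}\rangle}{\sqrt{1 + \langle v, V_{n+1}\rangle^2}}$.

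I would then clean this up: a real number $r = c/\|v\|$ satisfies "there exist integers $k \geq 1$ and $m$ with $kr = k + 2m$" iff $k(r-1) \in 2\mathbb{Z}$ for some $k$, iff $r - 1 \in \mathbb{Q}$, iff $r \in \mathbb{Q}$. So $\gamma$ closes up for some $T > 0$ exactly when $\frac{\langle v, V_{n+1}\rangle}{\sqrt{1 + \langle v, V_{n+1}\rangle^2}} \in \mathbb{Q}$, and finally one checks that when this holds, position \emph{and} velocity match (the velocity $\dot\gamma(t) = (-ic\,\gamma_R(t) + \dot\gamma_R(t))e^{-itc}$ transforms the same way, so $\gamma_R(T) = (-1)^k p$ and $\dot\gamma_R(T) = (-1)^k v$ together with $e^{-iTc}(-1)^k = 1$ give $\dot\gamma(T) = \dot\gamma(0)$).

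\textbf{Main obstacle.} The delicate point is the claim that $\gamma_R(T) \in S^1 \cdot p$ forces $\gamma_R(T) = \pm p$ — i.e.\ that the real great circle meets the complex line $\mathbb{C}p$ only in $\{\pm p\}$. This fails exactly when the real $2$-plane $\mathrm{span}_{\mathbb{R}}\{p, v\}$ is a complex line, which would mean $v \in \mathbb{C}p$, i.e.\ $v = \lambda p$ with $\lambda \in i\mathbb{R}$ (by $v \perp p$), i.e.\ $v$ is vertical, $v \parallel V_{n+1}(p)$. But a horizontal geodesic with vertical initial velocity is excluded since then the horizontal component of $v$ vanishes and $\|v\|^2 = \langle v, V_{n+1}\rangle^2$, contradicting arc-length parametrization $\|v\|^2 = 1 + \langle v, V_{n+1}\rangle^2$ (this uses Proposition~\ref{horvel}). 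So the case distinction is clean, but it needs to be stated carefully; this is where I would be most careful to get the $\pm$ signs and the period bookkeeping right, and it is also the step where one should double-check whether a \emph{shorter} period $T$ could arise from $\gamma_R(T)$ lying in the fiber at a non-real unit multiple of $p$ — ruled out by the same argument.
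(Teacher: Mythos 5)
Your proof is correct and follows essentially the same route as the paper: closedness forces $\sin(\|v\|T)=0$, hence $T=k\pi/\|v\|$ and $\gamma_R(T)=\pm p$, after which the residual phase condition $e^{-iT\langle v,V_{n+1}\rangle}=(-1)^k$ reduces to rationality of $\langle v,V_{n+1}\rangle/\|v\|=\langle v,V_{n+1}\rangle/\sqrt{1+\langle v,V_{n+1}\rangle^2}$. You are in fact more careful than the paper at the one delicate step (why the great circle can re-enter the fiber $S^1\cdot p$ only at $\pm p$), which the paper dispatches with the terse remark that $v\perp p$; your observation that this hinges on $v$ having a nonzero horizontal component, guaranteed by the arc-length normalization $\|v\|^2=1+\langle v,V_{n+1}\rangle^2$, supplies exactly the justification the paper leaves implicit.
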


\begin{proof}
The curve $\gamma:\real\to S^{2n+1}$ is closed if and only if for some $T>0$
$$p=e^{-i\langle v,V_{n+1}\rangle T}\left(p\cos(\|v\|T)+\frac{v}{\|v\|}\sin(\|v\|T)\right).$$

Since $v\in T_pS^{2n+1}$, we know that $v$ is orthogonal to the vector joining $0\in\real^{2n+2}$ to $p$, with respect to the usual Riemannian structure of $\real^{2n+2}$. This means that $\sin(\|v\|T)=0$, which forces $T=k\pi/\|v\|$, $k\in{\mathbb Z}$.

To complete the argument, we only need to see that
$$\pm e^{-ik(\langle v,V_{n+1}\rangle/\|v\|)\pi}p=p$$
if and only if
$$\frac{\langle v,V_{n+1}\rangle}{\|v\|}=\frac{\langle v,V_{n+1}\rangle}{\sqrt{1+\langle v,V_{n+1}\rangle^2}}\in{\mathbb Q},$$
where we have used the remark after Proposition~\ref{horvel}.
\end{proof}

\section{Sub-Riemannian Geodesics on $S^{4n+3}$}\label{gSq}

Let us consider the sphere $S^{4n+3}$ embedded as the boundary of the unit ball in $(n+1)-$dimensional quaternionic space $\quat^{n+1}$. As usual, let us denote the quaternionic units as $i$, $j$, and $k$. There is a natural right action of $Sp(1)\cong S^3$ on $\quat^{n+1}$, via componentwise multiplication by a quaternion of norm one. This action induces a quaternionic Hopf fibrations $S^3\to S^{4n+3}\to\quat P^n$, given by
$$\begin{array}{ccccc}H&:&S^{4n+3}&\to&\quat P^n\\&&(q_0,\ldots,q_n)&\mapsto&[q_0:\ldots:q_n].\end{array}$$

This submersion forms a principal $S^3-$bundle with connection given by the orthogonal complement to the vector fields
$$V_{n+1}^1(p)=-y_0\partial_{x_0}+x_0\partial_{y_0}+w_0\partial_{z_0}-z_0\partial_{w_0}-\ldots-y_n\partial_{x_n}+x_n\partial_{y_n}+w_n\partial_{z_n}-z_n\partial_{w_n},$$
$$V_{n+1}^2(p)=-z_0\partial_{x_0}-w_0\partial_{y_0}+x_0\partial_{z_0}+y_0\partial_{w_0}-\ldots-z_n\partial_{x_n}-w_n\partial_{y_n}+x_n\partial_{z_n}+y_n\partial_{w_n},$$
$$V_{n+1}^3(p)=-w_0\partial_{x_0}+z_0\partial_{y_0}-y_0\partial_{z_0}+x_0\partial_{w_0}-\ldots-w_n\partial_{x_n}-z_n\partial_{y_n}+y_n\partial_{z_n}+x_n\partial_{w_n},$$
at each $p=(x_0,y_0,z_0,w_0\ldots,x_n,y_n,z_n,w_n)\in S^{4n+3}$, with respect to the usual Riemannian metric of $S^{4n+3}$ as embedded in $\real^{4(n+1)}\cong\quat^{n+1}$. It is easy to see that the following commutation relations hold for $V_{n+1}^1,V_{n+1}^2,V_{n+1}^3$
$$[V_{n+1}^1,V_{n+1}^2]=2V_{n+1}^3,\quad[V_{n+1}^2,V_{n+1}^3]=2V_{n+1}^1,\quad[V_{n+1}^1,V_{n+1}^3]=-2V_{n+1}^2.$$
Thus one recovers the fact that $\spn\{V_{n+1}^1(p),V_{n+1}^2(p),V_{n+1}^3(p)\}$ is isomorphic as Lie algebra to $\mathfrak{sp}(1)$, the Lie algebra associated to $S^3$.

It is a well established fact that this distribution is bracket generating. In fact, the geometry of this spheres $S^{4n+3}$ is known to be a quaternionic analogue of CR-geometry, see~\cite{AK}. Note that the components of the vector $V_{n+1}^1(p)$ are the same as in the $\mathfrak{sp}(1)$ action $p\cdot i$. Similar statements hold for $V_{n+1}^2(p)$, $V_{n+1}^3(p)$ and $p\cdot j$, $p\cdot k$ respectively.

In order to apply Theorem~\ref{horgeod} in this situation, it is necessary to specify the $\mathfrak{sp}(1)-$valued connection form associated to the submersion~$H$. In this case, the connection form is given by
$$A(v)=i\langle v,V_{n+1}^1\rangle+j\langle v,V_{n+1}^2\rangle+k\langle v,V_{n+1}^3\rangle$$
where $v\in T_pS^{2n+1}$, $V_{n+1}^\alpha$ denotes $V_{n+1}^\alpha(p)$ ($\alpha=1,2,3$) and $\langle\cdot,\cdot\rangle$ stands for the standard inner product in $\real^{4(n+1)}$.  Moreover, the usual Riemannian structure on $S^{4n+3}$ is of constant bi-invariant type, since for any $q\in S^{4n+3}$ and $\xi=i\alpha+j\beta+k\gamma\in\mathfrak{sp}(1)$, $\alpha,\beta,\gamma\in\real$ we have
\begin{eqnarray*}
\left.\frac{d}{d\epsilon}\right|_{\epsilon=0}q\exp_{\mathfrak{sp}(1)}(\epsilon\xi)&=&\alpha q\cdot i+\beta q\cdot j+\gamma q\cdot k\\
&=&\alpha V_{n+1}^1(q)+\beta V_{n+1}^2(q)+\gamma V_{n+1}^3(q).
\end{eqnarray*}
Therefore, the inertia tensor is given by
$$\mathbb{I}_q(i\alpha+j\beta+k\gamma,i\tilde\alpha+j\tilde\beta+k\tilde\gamma)=$$
$$=\langle\alpha V_{n+1}(q)\beta V_{n+1}^2(q)+\gamma V_{n+1}^3(q),\tilde\alpha V_{n+1}(q)\tilde\beta V_{n+1}^2(q)+\tilde\gamma V_{n+1}^3(q)\rangle=$$
$$=\alpha\tilde\alpha+\beta\tilde\beta+\gamma\tilde\gamma,$$
which does not depend of the point.

As for Corollary~\ref{geodScr}, we have the following result.

\begin{coro}\label{geodSquat}
Let $p\in S^{4n+3}=\{(u_0,\ldots,u_n)\in\quat^{n+1}:|u_0|^2+\ldots+|u_n|^2=1\}$ and $v\in T_pS^{4n+3}$. If $\gamma_R(t)=(u_0(t),\ldots,u_n(t))$ is the great circle satisfying $\gamma_R(0)=p$ and $\dot\gamma_R(0)=v$, then the corresponding sub-Riemannian geodesic is given by
\begin{equation}\label{geodS4n+3}
\gamma(t)=\left(u_0(t)\cdot e^{-tA(v)},\ldots,u_n(t)\cdot e^{-tA(v)}\right).
\end{equation}
\end{coro}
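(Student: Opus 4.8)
The plan is to apply Theorem~\ref{horgeod} directly, exactly as Corollary~\ref{geodScr} was obtained from it in the $S^{2n+1}$ case, since all the necessary hypotheses have just been verified in the preceding discussion. Concretely, the submersion $H\colon S^{4n+3}\to\quat P^n$ has already been shown to be a principal $S^3$-bundle whose connection $\dist$ is the orthogonal complement of $\spn\{V_{n+1}^1,V_{n+1}^2,V_{n+1}^3\}$, and the ambient round metric on $S^{4n+3}$ has been shown to be of constant bi-invariant type, with inertia tensor $\mathbb{I}_q(i\alpha+j\beta+k\gamma,\,i\tilde\alpha+j\tilde\beta+k\tilde\gamma)=\alpha\tilde\alpha+\beta\tilde\beta+\gamma\tilde\gamma$ independent of $q$. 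Moreover the $\mathfrak{sp}(1)$-valued connection one-form has been identified as $A(v)=i\langle v,V_{n+1}^1\rangle+j\langle v,V_{n+1}^2\rangle+k\langle v,V_{n+1}^3\rangle$. Thus every ingredient that Theorem~\ref{horgeod} requires is in place.

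First I would invoke Theorem~\ref{horgeod} with $Q=S^{4n+3}$, $M=\quat P^n$, $G=S^3\cong Sp(1)$, and the connection $\dist$ above: it yields that every normal sub-Riemannian geodesic through $p$ with Riemannian-geodesic data $\gamma_R(t)=\exp_R(tv)$ is given by $\gamma(t)=\exp_R(tv)\,\exp_G(-tA(v))$, and that all normal sub-Riemannian geodesics arise this way. Second, I would translate the abstract ingredients into the coordinates of the statement. The Riemannian exponential on the round sphere is the great circle $\gamma_R(t)=(u_0(t),\ldots,u_n(t))$ with $\gamma_R(0)=p$, $\dot\gamma_R(0)=v$, given in the ambient $\real^{4(n+1)}$ by formula~\eqref{greatcircle}. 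The group exponential $\exp_{Sp(1)}\colon\mathfrak{sp}(1)\to S^3$ is the quaternionic exponential, so $\exp_G(-tA(v))=e^{-tA(v)}$ as a unit quaternion. Third, I would note that the right $Sp(1)$-action is componentwise right quaternion multiplication, so the product $\gamma_R(t)\cdot\exp_G(-tA(v))$ is precisely $\bigl(u_0(t)\cdot e^{-tA(v)},\ldots,u_n(t)\cdot e^{-tA(v)}\bigr)$, which is exactly~\eqref{geodS4n+3}.

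There is essentially no obstacle here: the result is a formal specialization of Theorem~\ref{horgeod}, and the only points needing a word of care are bookkeeping ones. One should make sure the action is applied on the correct side — since $Sp(1)$ acts on the right, the factor $e^{-tA(v)}$ multiplies each quaternionic coordinate on the right, and non-commutativity of $\quat$ makes the order genuinely matter, unlike the abelian $S^1$ case of Corollary~\ref{geodScr}. One should also confirm that the connection one-form is indeed the inverse of the infinitesimal generator $\sigma_q$ restricted to the vertical space, which follows from the computation $\sigma_q(i\alpha+j\beta+k\gamma)=\alpha V_{n+1}^1(q)+\beta V_{n+1}^2(q)+\gamma V_{n+1}^3(q)$ together with the orthonormality of the $V_{n+1}^\alpha$. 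With these remarks, the corollary follows immediately from Theorem~\ref{horgeod}, in complete parallel with the proof of Corollary~\ref{geodScr}.
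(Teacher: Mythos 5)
Your proposal is correct and follows exactly the paper's route: the corollary is obtained as a direct specialization of Theorem~\ref{horgeod}, using the connection form $A$, the constant bi-invariant type of the round metric, and the identification of $\exp_R$ with the great circle and $\exp_G$ with the quaternionic exponential, all verified in the preceding discussion. Your extra remarks on the right action and non-commutativity are sensible bookkeeping but do not change the argument.
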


In Corollary~\ref{geodSquat}, the quaternionic exponential is defined by
$$e^{ai+bj+ck}=\cos\sqrt{a^2+b^2+c^2}+\sin\sqrt{a^2+b^2+c^2}\cdot\frac{ai+bj+ck}{\sqrt{a^2+b^2+c^2}},$$
for $a,b,c\in\real$. Note that the curve $e^{-tA(v)}$ is simply the Riemannian geodesic in $S^3$ starting at the identity of the group $e=(1,0,0,0)$, with initial velocity vector $(0,-\langle v,V_{n+1}^1\rangle,-\langle v,V_{n+1}^2\rangle,-\langle v,V_{n+1}^3\rangle)$.

Corollary~\ref{geodSquat} implies immediate analogues to Proposition~\ref{cpn} and to Proposition~\ref{clgeod}, which we state for the sake of completeness. Proofs are adaptations of the aforementioned Propositions.

\begin{prop}
The set of sub-Riemannian geodesics in $S^{4n+3}$ arising from great circles $\gamma_R(t)$ such that $\dot\gamma_R(0)$ is orthogonal to $V_{n+1}^1$, $V_{n+1}^2$ and $V_{n+1}^3$ is diffeomorphic to $\quat P^n$.
\end{prop}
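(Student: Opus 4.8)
The plan is to transcribe, almost word for word, the proof of Proposition~\ref{cpn}, with the circle group replaced by $S^3$ and the complex numbers by the quaternions. The only point at which anything genuinely has to be checked is the very first one: if the initial velocity $v=\dot\gamma_R(0)$ is orthogonal to $V_{n+1}^1$, $V_{n+1}^2$ and $V_{n+1}^3$, then the $\mathfrak{sp}(1)$-valued connection form satisfies
$$A(v)=i\langle v,V_{n+1}^1\rangle+j\langle v,V_{n+1}^2\rangle+k\langle v,V_{n+1}^3\rangle=0,$$
so that $e^{-tA(v)}\equiv 1$ and the fibre factor in formula~\eqref{geodS4n+3} drops out. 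Hence, by Corollary~\ref{geodSquat}, the sub-Riemannian geodesic through $p$ with this velocity coincides with the great circle itself,
$$\gamma(t)=\gamma_R(t)=p\cos(\|v\|t)+\frac{v}{\|v\|}\sin(\|v\|t),$$
exactly as in the contact case treated in Proposition~\ref{cpn}.

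It then remains only to identify the set of such curves with $\quat P^n$. Fixing the base point $p\in S^{4n+3}$, the requirement that $v\in T_pS^{4n+3}$ be orthogonal to $V_{n+1}^1(p)$, $V_{n+1}^2(p)$ and $V_{n+1}^3(p)$ amounts to saying that $v$ lies in the horizontal fibre over $p$ (equivalently, that the quaternionic Hermitian product of $v$ and $p$ vanishes), a quaternionic hyperplane in $\quat^{n+1}$. As in the proof of Proposition~\ref{cpn}, the locus of the resulting great circle is unchanged under rescaling $v$, so it is determined by the projective class $[v]$, which yields the correspondence $\gamma\mapsto[v]\in\quat P^n$; smoothness of this map and of its inverse is read off directly from the displayed formula for $\gamma$, and injectivity holds because two great circles of this form sharing a locus are tangent at $p$ in the same direction. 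One should also record, in parallel with the remark after Proposition~\ref{cpn}, that this $\quat P^n$ embeds in $S^{4n+3}$ as a submanifold transversal to $V_{n+1}^1,V_{n+1}^2,V_{n+1}^3$ along the fibre through $p$, playing the role of the horizontal space at the identity of the quaternionic $\quat$-type group.

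Since the identity $A(v)=0$ together with Corollary~\ref{geodSquat} do all of the geometric work, I do not expect any real obstacle here; the one thing to be careful about is what is held fixed throughout — the base point $p$ and the chosen parametrization of the great circle — so that the assignment $\gamma\leftrightarrow[v]$ is genuinely a bijection and the evident charts make it a diffeomorphism. Everything else is an immediate adaptation of the corank~$1$ argument.
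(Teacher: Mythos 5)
Your argument is exactly the one the paper intends: the paper gives no separate proof of this proposition, stating only that it is an adaptation of the proof of Proposition~\ref{cpn}, and your adaptation — noting that orthogonality to $V_{n+1}^1,V_{n+1}^2,V_{n+1}^3$ forces $A(v)=0$, so that by Corollary~\ref{geodSquat} the geodesic degenerates to the great circle $p\cos(\|v\|t)+\frac{v}{\|v\|}\sin(\|v\|t)$, whose locus is then recorded by $[v]\in\quat P^n$ — is precisely that. Your version is in fact slightly more explicit than the source (e.g.\ in identifying the horizontal space at $p$ as a quaternionic subspace), and any residual looseness in the final identification of the set of such curves with $\quat P^n$ is already present, in identical form, in the paper's own proof of Proposition~\ref{cpn}.
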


\begin{prop}
Let $\gamma:{\mathbb R}\to S^{4n+3}$ be a complete sub-Riemannian geodesic parameterized by arc-length, with initial velocity $v\in T_pS^{2n+1}$. Then $\gamma$ is closed if and only if $$\frac{\langle v,V_{n+1}^1\rangle}{\|v\|^2},\frac{\langle v,V_{n+1}^2\rangle}{\|v\|^2},\frac{\langle v,V_{n+1}^3\rangle}{\|v\|^2}\in{\mathbb Q}.$$
\end{prop}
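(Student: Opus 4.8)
The plan is to mimic directly the proof of Proposition~\ref{clgeod}, replacing the circle-valued vertical factor $e^{-it\langle v,V_{n+1}\rangle}$ by the $S^3$-valued factor $e^{-tA(v)}$ from Corollary~\ref{geodSquat}. By~\eqref{geodS4n+3} and~\eqref{greatcircle}, the geodesic is
\begin{equation*}
\gamma(t)=\left(p\cos(\|v\|t)+\frac{v}{\|v\|}\sin(\|v\|t)\right)\cdot e^{-tA(v)},
\end{equation*}
so $\gamma$ is closed precisely when there is $T>0$ with $\left(p\cos(\|v\|T)+\tfrac{v}{\|v\|}\sin(\|v\|T)\right)\cdot e^{-TA(v)}=p$. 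Since $v\in T_pS^{4n+3}$ is orthogonal in $\real^{4(n+1)}$ to the position vector $p$, the two summands $p$ and $v/\|v\|$ are $\real$-orthogonal unit vectors; taking the real part of the Hermitian (quaternionic) inner product of both sides with $p$ forces $\cos(\|v\|T)\,\|e^{-TA(v)}\|^2$-type bookkeeping to collapse, and as in Proposition~\ref{clgeod} one concludes $\sin(\|v\|T)=0$, hence $T=m\pi/\|v\|$ for some $m\in\ent$ (and then $m$ even, because for odd $m$ the cosine factor is $-1$ and one would need $p\cdot e^{-TA(v)}=-p$, which a second real-part computation rules out unless one instead absorbs the sign into the exponential — this parity point has to be handled with a little care).

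Once $T=m\pi/\|v\|$ with $m$ even, say $m=2\ell$, the closure condition reduces to $p\cdot e^{-(2\ell\pi/\|v\|)A(v)}=p$, and since the $Sp(1)$-action is free this is equivalent to $e^{-(2\ell\pi/\|v\|)A(v)}=1\in S^3$. Writing $a=\langle v,V_{n+1}^1\rangle$, $b=\langle v,V_{n+1}^2\rangle$, $c=\langle v,V_{n+1}^3\rangle$ and $\rho=\sqrt{a^2+b^2+c^2}=\|A(v)\|$, the quaternionic exponential formula given after Corollary~\ref{geodSquat} shows $e^{-sA(v)}=\cos(\rho s)-\sin(\rho s)\frac{ai+bj+ck}{\rho}$, so $e^{-sA(v)}=1$ iff $\rho s\in 2\pi\ent$. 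Hence $\gamma$ is closed iff $\frac{2\ell\pi\rho}{\|v\|}\in 2\pi\ent$ for some positive even integer $2\ell$, i.e. iff $\rho/\|v\|\in\rac$. Finally, the arc-length normalization (the remark after Proposition~\ref{horvel}, in its $S^{4n+3}$ form) gives $\|v\|^2=1+\rho^2$, so $\rho/\|v\|\in\rac$ iff $\rho/\sqrt{1+\rho^2}\in\rac$, which, since squaring and rationalizing each coordinate ratio is elementary, is equivalent to $a/\|v\|^2, b/\|v\|^2, c/\|v\|^2\in\rac$ — the stated condition. (One should double-check here that the paper's normalization $\|v\|^2 = 1 + \rho^2$ is what makes $\langle v, V^\alpha\rangle/\|v\|^2$ the natural invariants, rather than $/\|v\|$; this is the one place where the quaternionic case genuinely differs in appearance from Proposition~\ref{clgeod}.)

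The main obstacle I anticipate is not the reduction to $e^{-sA(v)}=1$, which is routine, but rather the passage from the single rationality condition $\rho/\sqrt{1+\rho^2}\in\rac$ to the three simultaneous conditions $a/\|v\|^2,b/\|v\|^2,c/\|v\|^2\in\rac$ that the statement actually asserts. These are not obviously the same: $\rho=\sqrt{a^2+b^2+c^2}$ being commensurable with $\|v\|$ does not by itself pin down $a,b,c$ individually. The resolution must be that the direction of the vertical part $(a,b,c)$ is itself a free parameter of the geodesic that one is allowed to fix, or — more likely — that the intended reading is that for a \emph{given} geodesic the ratios $a/\rho, b/\rho, c/\rho$ are fixed reals, so that $a/\|v\|^2 = (a/\rho)\cdot(\rho/\|v\|^2)$ and one needs both $\rho/\|v\|^2\in\rac$ (equivalently $\rho/\sqrt{1+\rho^2}\in\rac$ after the normalization) \emph{and} the direction cosines rational; I would state and use this clarification explicitly rather than let it pass silently. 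Modulo pinning down that point, the proof is a direct transcription of Proposition~\ref{clgeod} with the circle replaced by $S^3$.
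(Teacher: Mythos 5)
The paper offers no argument for this proposition beyond the remark that it is an ``adaptation'' of Proposition~\ref{clgeod}, so the only thing to compare against is that adaptation --- which is exactly what you carry out, and your core derivation is sound. The cleanest justification of the step you wave at ($\sin(\|v\|T)=0$) is that closedness forces $\gamma_R(T)$ to lie in the $Sp(1)$-fibre through $p$, which is contained in $\spn\{p,V_{n+1}^1,V_{n+1}^2,V_{n+1}^3\}$; pairing with the horizontal part of $v$ (nonzero, since it is the unit vector $\dot\gamma(0)$) kills $\sin(\|v\|T)$. Then $T=m\pi/\|v\|$, freeness of the action gives $e^{-TA(v)}=(-1)^m$, i.e. $m\|A(v)\|/\|v\|\in\ent$ with matching parity, and (passing to an even multiple of $m$ --- you need not discard odd $m$ at all) this holds for some $m>0$ iff $\|A(v)\|/\|v\|\in\rac$. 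Up to this point your proof is essentially the intended one.

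The genuine gap is the sentence asserting that $\rho/\sqrt{1+\rho^2}\in\rac$ ``is equivalent to'' $\langle v,V_{n+1}^\alpha\rangle/\|v\|^2\in\rac$ for $\alpha=1,2,3$: that equivalence is false, and your closing paragraph is right to distrust it. The criterion you actually derived constrains only the \emph{length} $\rho=\|A(v)\|$ of the vertical part, not its direction. Concretely, with the arc-length normalization $\|v\|^2=1+\rho^2$: taking $\langle v,V^1\rangle=\tfrac34\cos\theta$, $\langle v,V^2\rangle=\tfrac34\sin\theta$, $\langle v,V^3\rangle=0$ gives $\rho/\|v\|=3/5$, so the geodesic is closed, yet $\langle v,V^1\rangle/\|v\|^2=\tfrac{12}{25}\cos\theta$ is irrational for generic $\theta$; conversely taking all three products equal with common ratio $1/10$ forces $3a^2-10a+1=0$ and $\rho/\|v\|=\sqrt{3a/10}\notin\rac$, so the stated condition is satisfied but the geodesic is not closed. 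Hence the three-fold condition in the statement is neither necessary nor sufficient, and no argument can close your gap as the statement stands; the correct conclusion of the adaptation is the single condition $\|A(v)\|/\sqrt{1+\|A(v)\|^2}\in\rac$ (note also the suspicious $\|v\|^2$ in the denominators where Proposition~\ref{clgeod} has $\|v\|$). You should delete the false ``is equivalent to'' clause and state the criterion you actually proved; the defect it exposes lies in the printed statement, not in your reduction.
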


In analogy with Proposition~\ref{horvel}, let us consider a similar statement in the case of the spheres $S^{4n+3}$.

\begin{prop}\label{horvelS3}
For a horizontal sub-Riemannian geodesic of the form
$$\gamma(t)=\left(w_0(t)\cdot e^{-tA(v)},\ldots,w_n(t)\cdot e^{-tA(v)}\right)$$
the following equation holds
$$\|\dot\gamma(t)\|^2+\|A(v)\|^2=\|v\|^2,$$
where $\|A(v)\|^2=\langle v,V_{n+1}^1\rangle^2+\langle v,V_{n+1}^2\rangle^2+\langle v,V_{n+1}^3\rangle^2$.
\end{prop}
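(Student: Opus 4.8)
The plan is to mimic the proof of Proposition~\ref{horvel}, but using the quaternionic (symplectic) Hermitian product on $\quat^{n+1}$ in place of the complex Hermitian product. First I would fix notation: for $u,w\in\quat^{n+1}$ write $\langle u,w\rangle_{\quat}=\sum_{k=0}^n u_k\overline{w_k}$ for the quaternionic-valued Hermitian product, so that $\mathrm{Re}\,\langle\cdot,\cdot\rangle_{\quat}=\langle\cdot,\cdot\rangle$ is the standard real inner product on $\real^{4(n+1)}$. Writing $\gamma(t)=\gamma_R(t)\cdot e^{-tA(v)}$ componentwise and differentiating, one gets $\dot\gamma(t)=\bigl(\dot\gamma_R(t)-\gamma_R(t)\cdot A(v)\bigr)e^{-tA(v)}$, where the multiplications are componentwise on the right by the quaternion $A(v)$. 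The key point, which replaces the use of $\langle p,v\rangle=0$ in the contact case, is to compute $\langle\dot\gamma_R(t),\gamma_R(t)\rangle_{\quat}$ along a great circle.

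Next I would carry out that computation. Using the explicit form~\eqref{greatcircle}, $\gamma_R(t)=p\cos(\|v\|t)+\tfrac{v}{\|v\|}\sin(\|v\|t)$ and $\dot\gamma_R(t)=-p\|v\|\sin(\|v\|t)+v\cos(\|v\|t)$, expanding $\langle\dot\gamma_R(t),\gamma_R(t)\rangle_{\quat}$ bilinearly gives a combination of $\langle p,p\rangle_{\quat}=1$, $\langle v,v\rangle_{\quat}=\|v\|^2$ (both real since $p,v$ have unit-consistent norms), and the cross terms $\langle v,p\rangle_{\quat}$ and $\langle p,v\rangle_{\quat}$. Since $v\in T_pS^{4n+3}$ means $\mathrm{Re}\,\langle v,p\rangle_{\quat}=\langle v,p\rangle=0$, only the imaginary (i,j,k) parts of $\langle v,p\rangle_{\quat}$ survive, and a short calculation identifies these with $\langle v,V_{n+1}^1\rangle$, $\langle v,V_{n+1}^2\rangle$, $\langle v,V_{n+1}^3\rangle$ — in other words $\langle v,p\rangle_{\quat}=-A(v)$ (up to sign conventions one must track carefully), and the $t$-dependent terms cancel just as in~\eqref{hermgammavel}, leaving $\langle\dot\gamma_R(t),\gamma_R(t)\rangle_{\quat}$ constant and equal to (the imaginary part of) $\langle v,p\rangle_{\quat}$, i.e. to $-A(v)$.

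With that in hand I would finish exactly as in Proposition~\ref{horvel}: compute $\|\dot\gamma(t)\|^2=\mathrm{Re}\,\langle\dot\gamma(t),\dot\gamma(t)\rangle_{\quat}$. Since $e^{-tA(v)}$ is a unit quaternion acting on the right, it is a real-inner-product isometry componentwise, so $\|\dot\gamma(t)\|^2=\mathrm{Re}\,\langle\dot\gamma_R(t)-\gamma_R(t)A(v),\,\dot\gamma_R(t)-\gamma_R(t)A(v)\rangle_{\quat}$. Expanding yields $\|\dot\gamma_R(t)\|^2 + \mathrm{Re}\langle\gamma_R A(v),\gamma_R A(v)\rangle_{\quat} - 2\,\mathrm{Re}\langle\dot\gamma_R,\gamma_R A(v)\rangle_{\quat} = \|v\|^2 + \|A(v)\|^2 - 2\,\mathrm{Re}\bigl(\langle\dot\gamma_R,\gamma_R\rangle_{\quat}\overline{A(v)}\bigr)$, and substituting $\langle\dot\gamma_R,\gamma_R\rangle_{\quat}=-A(v)$ gives $\|v\|^2+\|A(v)\|^2-2\|A(v)\|^2=\|v\|^2-\|A(v)\|^2$, which is the claim.

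The main obstacle is bookkeeping with noncommutativity: unlike the complex case, the quaternion $A(v)$ does not commute with $e^{-tA(v)}$'s coefficients in general, and one must be careful about left versus right multiplication, about the order of factors in $\langle\gamma_R A(v),\gamma_R A(v)\rangle_{\quat}$ versus $\|A(v)\|^2\langle\gamma_R,\gamma_R\rangle_{\quat}$, and about the sign/conjugation conventions linking $\langle v,V_{n+1}^\alpha\rangle$ to the imaginary part of $\langle v,p\rangle_{\quat}$. Fortunately $A(v)$ and $e^{-tA(v)}$ lie in the same commutative subalgebra $\real\oplus\real\tfrac{A(v)}{\|A(v)\|}\cong\comp$ of $\quat$, which makes the crucial cross-term manipulation legitimate; spelling this out cleanly is the one place where genuine care beyond the contact case is required.
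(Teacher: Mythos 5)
Your strategy does lead to a proof, but it is a genuinely different and much longer route than the paper's. The paper's argument is essentially two lines: since a sub-Riemannian geodesic has constant speed, it suffices to evaluate at $t=0$; differentiating \eqref{geodS4n+3} there gives $\dot\gamma(0)=v-\sum_{\alpha=1}^3\langle v,V_{n+1}^\alpha\rangle V_{n+1}^\alpha(p)$, i.e.\ $\dot\gamma(0)$ is the orthogonal projection of $v$ onto the horizontal space, and the claimed identity is just Pythagoras for the orthonormal frame $\{V_{n+1}^1,V_{n+1}^2,V_{n+1}^3\}$ of the vertical space. No quaternionic Hermitian form and no $t$-dependent computation are needed. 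What your all-$t$ computation buys in exchange is that it proves directly that $\|\dot\gamma(t)\|$ is constant, rather than quoting that property of normal geodesics.

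There is, however, one step in your sketch that would fail as literally written, and it is exactly at the spot you flag. With your convention $\langle u,w\rangle_{\quat}=\sum_k u_k\overline{w_k}$, the quantity $\langle\dot\gamma_R,\gamma_R\rangle_{\quat}=\sum_k\dot u_k\overline{u_k}$ is the wrong pairing: its imaginary part encodes the \emph{left}-multiplication fields $p\mapsto i\cdot p$, etc., not the right-action fields $V_{n+1}^\alpha(p)=p\cdot i$, etc.; one checks that $\sum_k\dot u_k\overline{u_k}$ and $\sum_k\overline{u_k}\dot u_k$ are conjugate by $p$ and are in general \emph{different} imaginary quaternions, neither equal to $\pm$ the other. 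The cross term $\mathrm{Re}\sum_k\dot u_k\,\overline{A(v)}\,\overline{u_k}$ reduces, by cyclicity of the real part, to $\mathrm{Re}\bigl(\bigl(\sum_k\overline{u_k}\dot u_k\bigr)\overline{A(v)}\bigr)$, and it is the conjugate-in-the-first-slot sum $\sum_k\overline{u_k}\dot u_k$ that is constant in $t$ (by the same cancellation as in \eqref{hermgammavel}) and equal to $+A(v)$, its $i,j,k$ components being exactly $\langle v,V_{n+1}^\alpha\rangle$. With that correction the cross term contributes $-2\|A(v)\|^2$ and your expansion closes to $\|v\|^2-\|A(v)\|^2$; substituting your stated value $\langle\dot\gamma_R,\gamma_R\rangle_{\quat}=-A(v)$ into your stated formula would instead produce $\|v\|^2+3\|A(v)\|^2$. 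So the order of the quaternion factors is not a matter of convention here — it is the difference between the right answer and a wrong one, and this is precisely the bookkeeping the paper's $t=0$ argument avoids altogether.
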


\begin{proof}

Recall that if $\gamma$ is a sub-Riemannian geodesic, then the length of the velocity vector $\|\dot\gamma(t)\|$ does not depend on $t$. Thus without loss of generality we can assume $t=0$. Let us introduce the following notation
\begin{eqnarray*}
p&=&\gamma(0)=(x_0,y_0,z_0,w_0,\ldots,x_n,y_n,z_n,w_n)\in S^{4n+3},\\
v&=&\dot\gamma_R(0)=(v_{x_0},v_{y_0},v_{z_0},v_{w_0},\ldots,v_{x_n},v_{y_n},v_{z_n},v_{w_n})\in T_pS^{4n+3}.
\end{eqnarray*}
Differentiating equation~\eqref{geodS4n+3} and evaluating at $t=0$, we have
\begin{eqnarray*}
\dot\gamma(0)&=&v-\langle v,V_{n+1}^1\rangle V_{n+1}^1-\langle v,V_{n+1}^2\rangle V_{n+1}^2-\langle v,V_{n+1}^3\rangle V_{n+1}^3.
\end{eqnarray*}

The orthogonality of the vector fields $V_{n+1}^1,V_{n+1}^2,V_{n+1}^3$ implies the desired relation.

\end{proof}

\section{Curvature of sub-Riemannian geodesics on ${\mathbf H}^1$}

The proof of Proposition~\ref{hurtros} is given in~\cite{RitRos} for the case of the three dimensional Heisenberg group. As mentioned in~\cite{HR}, the proof for the case of the sub-Riemannian three dimensional sphere is basically the same. The authors have pointed out, in private communication, that the same result holds for all three dimensional pseudo-Hermitian manifolds.

Note that that if $M$ is either the Heisenberg group of topological dimension 3 or the sphere $S^3$, with Reeb vector field $R$, then the quotient vector bundle
$$TM/\spn\{R\}\to M$$
is trivial. We have not been able to show that the corresponding vector bundle
$$TS^7/\spn\{V_2^1,V_2^2,V_3^3\}\to S^7$$
is trivial, which makes difficult to find an analogous argument to the one employed in~\cite{HR}.

The main goal of this section is to find an analogue to Proposition~\ref{hurtros} for the Gromov-Margulis-Mitchell-Mostow tangent cone of $S^7$, see~\cite{Grom,MM,Mitch,M}, which corresponds to the seven dimensional quaternionic $H-$type group $\mathbf{H}^1$, as presented in~\cite{CM}. Observe that the idea of studying the tangent cone before the sub-Riemannian manifold of interest corresponds to the case in~\cite{RitRos}, since the three dimensional Heisenberg group is the tangent cone to the sub-Riemannian $S^3$. We will study whether this method extends to $S^7$ in a forthcoming paper.

\subsection{The quaternionic $H-$type group $\mathbf{H}^1$}

Let us consider the $4\times 4$ matrices $\mathcal{I,J}$ and $\mathcal K$, given by
$${\mathcal I}=\left(\begin{array}{rrrr}0&1&0&0\\-1&0&0&0\\0&0&0&1\\0&0&-1&0\end{array}\right),\quad{\mathcal J}=\left(\begin{array}{rrrr}0&0&0&-1\\0&0&-1&0\\0&1&0&0\\1&0&0&0\end{array}\right),$$
$${\mathcal K}=\left(\begin{array}{rrrr}0&0&-1&0\\0&0&0&1\\1&0&0&0\\0&-1&0&0\end{array}\right).$$
Note that $\mathcal{I,J}$ and $\mathcal K$ are a fixed representation of the quaternion units, {\em i.e.} if ${\mathcal U}$ denotes the identity matrix of size $4\times4$, then $\spn\{{\mathcal U},{\mathcal I},{\mathcal J},{\mathcal K}\}\cong\quat$ as algebras via the isomorphism
$$\varphi:\spn\{{\mathcal U},{\mathcal I},{\mathcal J},{\mathcal K}\}\to\quat$$
given by $\varphi({\mathcal U})=1,\varphi({\mathcal I})=i,\varphi({\mathcal J})=j,\varphi({\mathcal K})=k$ and extended by linearity.

The seven dimensional quaternionic $H-$type group $\mathbf{H}^1$ corresponds to the manifold $\real^4\oplus\real^3$ with the group operation $\circ$ defined by
$$(x,z)\circ(x',z')=\left(x+x',z_{\mathcal{I}}+z'_{\mathcal{I}}+\frac12\,x'^T{\mathcal{I}}x,\right.$$
$$\left.z_{\mathcal{J}}+z'_{\mathcal{J}}+\frac12\, x'^T{\mathcal{J}}x,z_{\mathcal{K}}+z'_{\mathcal{K}}+
\frac12\, x'^T{\mathcal{K}}x\right)$$
where $x,y,z$ are column vectors and $x'^T,y'^T,z'^T$ are row vectors in $\real^4$.

The Lie algebra $\mathfrak{h}^1$ corresponding to ${\mathbf H}^1$ is spanned by the left invariant vector fields
$$X_1(x,z)=\frac{\partial}{\partial x_1}+\frac{1}{2}\left(+x_2\frac{\partial}{\partial z_{\mathcal I}}-
x_4\frac{\partial}{\partial z_{\mathcal J}}-x_3\frac{\partial}{\partial z_{\mathcal K}}\right),$$
$$X_2(x,z)=\frac{\partial}{\partial x_2}+\frac{1}{2}\left(-x_1\frac{\partial}{\partial z_{\mathcal I}}-
x_3\frac{\partial}{\partial z_{\mathcal J}}+x_4\frac{\partial}{\partial z_{\mathcal K}}\right),$$
$$X_3(x,z)=\frac{\partial}{\partial x_3}+\frac{1}{2}\left(+x_4\frac{\partial}{\partial z_{\mathcal I}}+
x_2\frac{\partial}{\partial z_{\mathcal J}}+x_1\frac{\partial}{\partial z_{\mathcal K}}\right),$$
$$X_4(x,z)=\frac{\partial}{\partial x_4}+\frac{1}{2}\left(-x_3\frac{\partial}{\partial z_{\mathcal I}}+
x_1\frac{\partial}{\partial z_{\mathcal J}}-x_2\frac{\partial}{\partial z_{\mathcal K}}\right),$$
$$Z_{\mathcal I}(x,z)=\frac{\partial}{\partial z_{\mathcal I}},\quad Z_{\mathcal J}(x,z)=\frac{\partial}{\partial z_{\mathcal J}},\quad Z_{\mathcal K}(x,z)=\frac{\partial}{\partial z_{\mathcal K}}.$$
at a point $(x,z)=(x_1,x_2,x_3,x_4,z_{\mathcal I},z_{\mathcal J},z_{\mathcal K})\in\mathbf{H}^1$. A Riemannian metric $\langle\cdot,\cdot\rangle$ in $\mathbf{H}^1$ is declared so that $X_1,\ldots,X_4,Z_{\mathcal I},\ldots,Z_{\mathcal K}$ is an orthonormal frame at each $(x,z)\in\mathbf{H}^1$. The sub-Riemannian structure on ${\mathbf H}^1$ we are interested in is defined by the left invariant distribution ${\mathcal D}=\spn\{X_1,X_2,X_3,X_4\}$ and the restriction of the metric previously defined.

Observe that ${\mathcal D}$ is bracket generating of step two. In fact, we have the commutator relations
\begin{equation}\label{commHtype}
\begin{array}{ccccr}
\mbox{$[X_1,X_2]$}&=&[X_3,X_4]&=&-Z_{\mathcal I},\\
\mbox{$[X_2,X_3]$}&=&[X_1,X_4]&=&Z_{\mathcal J},\\
\mbox{$[X_1,X_3]$}&=&[X_4,X_2]&=&Z_{\mathcal K}.
\end{array}
\end{equation}
All the remaining commutators between the chosen basis of $\mathfrak{h}^1$ vanish.

From the well-known Koszul formula for the Levi-Civita connection associated to the metric $\langle\cdot,\cdot\rangle$
$$\langle Z,\nabla_YX\rangle=\frac12(X\langle Y,Z\rangle+Y\langle Z,X\rangle-Z\langle X,Y\rangle-$$ $$-\langle[X,Z],Y\rangle-\langle[Y,Z],X\rangle-\langle[X,Y],Z\rangle),$$
see for example~\cite{dC}, the orthonormality of the basis $\{X_1,\ldots,X_4,Z_{\mathcal I},\ldots,Z_{\mathcal K}\}$, and equations~\eqref{commHtype} we get that
$$\langle X_b,\nabla_{X_a}Z_r\rangle=-\frac12\langle[X_a,X_b],Z_r\rangle,\quad\langle Z_s,\nabla_{X_a}Z_r\rangle=0,$$
for any $a,b=1,\ldots,4$, $r,s={\mathcal I},{\mathcal J},{\mathcal K}$. This translates to the equation
\begin{equation}
\nabla_{X_a}Z_r=-\frac12\sum_{b=1}^4\langle[X_a,X_b],Z_r\rangle X_b,
\end{equation}
which reduces to the following identities
$$\nabla_{X_1}Z_{\mathcal I}=\frac12X_2,\quad\nabla_{X_2}Z_{\mathcal I}=-\frac12X_1,\quad\nabla_{X_3}Z_{\mathcal I}=\frac12X_4,\quad\nabla_{X_4}Z_{\mathcal I}=-\frac12X_3,$$
$$\nabla_{X_1}Z_{\mathcal J}=-\frac12X_4,\quad\nabla_{X_2}Z_{\mathcal J}=-\frac12X_3,\quad\nabla_{X_3}Z_{\mathcal J}=\frac12X_2,\quad\nabla_{X_4}Z_{\mathcal J}=\frac12X_1,$$
$$\nabla_{X_1}Z_{\mathcal K}=-\frac12X_3,\quad\nabla_{X_2}Z_{\mathcal K}=\frac12X_4,\quad\nabla_{X_3}Z_{\mathcal K}=\frac12X_1,\quad\nabla_{X_4}Z_{\mathcal K}=-\frac12X_2.$$

Therefore, it follows that the maps $J_r:{\mathcal D}\to{\mathcal D}$ defined by
$$J_r(X)=2\nabla_XZ_r,\quad r={\mathcal I},{\mathcal J},{\mathcal K},$$
are almost complex structures. Note that the equation
\begin{equation}\label{complexHtype}
\langle J_r(U_1),U_2\rangle+\langle U_1,J_r(U_2)\rangle=0
\end{equation}
holds for every $r={\mathcal I},{\mathcal J},{\mathcal K}$ and every $U_1,U_2\in\mathcal{D}$. Note in particular that equation~\eqref{complexHtype} implies that $\langle U,J_r(U)\rangle=0$ for all $U\in\mathcal{D}$.

\subsection{A variational argument}

Consider a manifold $M$ and let $\dist\subset TM$ be a distribution. A variation of a curve $\gamma:[a,b]\to M$ is a $C^2$-map $\tilde\gamma:I_1 \times I_2\to M$, where $I_1, I_2$ are open intervals, $0\in I_2$ and $\tilde\gamma(s,0)=\gamma(s)$. In what follows, we will denote $\tilde\gamma(s,\varepsilon)=\gamma_\varepsilon(s)$.

Let $W_\varepsilon$ be the vector field along $\gamma_\varepsilon$ given by
$$W_{\varepsilon}(s)=\left.\frac{\partial\gamma_\tau(s)}{\partial\tau}\right|_{\tau=\varepsilon}=\frac{\partial\gamma}{\partial\tau}(s,\varepsilon).$$
Note that the vector fields $W_\varepsilon$ and $\dot\gamma_\varepsilon$ commute
$$[W_\varepsilon,\dot\gamma_\varepsilon]=\left[\frac{\partial\gamma}{\partial\varepsilon}(s,\varepsilon),\frac{\partial\gamma}{\partial s}(s,\varepsilon)\right]=\left[\frac{\partial}{\partial\varepsilon},\frac{\partial}{\partial s}\right]\gamma(s,\varepsilon)=0.$$

A variation $\gamma_\varepsilon$ of a horizontal curve $\gamma$ is called admissible if all curves $\gamma_\varepsilon:I_1\to M$ are horizontal, $\gamma_\varepsilon(a)=\gamma(a)$ and $\gamma_\varepsilon(b)=\gamma(b)$ for all $\varepsilon\in I_2$. Observe that for an admissible variation of $\gamma$, the vector field $W_0$ vanishes at the endpoints of $\gamma$: $W_0(\gamma(a))=W_0(\gamma(b))=0$.

Let us study an admissible variation $\gamma_\varepsilon$ of a horizontal curve $\gamma$ in the case of ${\mathbf H}^1$, with the Riemannian metric defined in the previous Subsection. Since the variation is admissible, we have
$$\langle\dot\gamma_\varepsilon,Z_{\mathcal I}\rangle=\langle\dot\gamma_\varepsilon,Z_{\mathcal J}\rangle=
\langle\dot\gamma_\varepsilon,Z_{\mathcal K}\rangle=0.$$

In what follows, for an arbitrary vector field $X$ on ${\mathbf H}^1$, we will denote by $X_H$ and $X_V$ the orthogonal projections of $X$ to the horizontal distribution $\mathcal{D}\subset T\mathbf{H}^1$ and the vertical bundle $\spn\{Z_{\mathcal I},Z_{\mathcal J},Z_{\mathcal K}\}$ respectively.

The horizontality conditions $\langle\dot\gamma_\varepsilon,Z_r\rangle=0$, for $r={\mathcal I},{\mathcal J},{\mathcal K}$, yield
\begin{eqnarray*}
0=\left.\frac{d}{d\varepsilon}\right|_{\varepsilon=0}\langle\dot\gamma_\varepsilon,Z_r\rangle&=& \langle\nabla_{W_0}\dot\gamma,Z_r\rangle+\langle\dot\gamma,\nabla_{W_0}Z_r\rangle\\
&=&\langle\nabla_{\dot\gamma}W_0,Z_r\rangle+\langle\dot\gamma,\nabla_{W_{0_H}}Z_r\rangle\\
&=&\dot\gamma\langle W_0,Z_r\rangle-\langle W_0,\nabla_{\dot\gamma}Z_r\rangle+\langle\dot\gamma,J_r(W_{0_H})\rangle\\
&=&\dot\gamma\langle W_0,Z_r\rangle-\langle W_{0_H},J_r(\dot\gamma)\rangle-\langle J_r(\dot\gamma),W_{0_H}\rangle\\
&=&\dot\gamma\langle W_0,Z_r\rangle-2\langle W_{0_H},J_r(\dot\gamma)\rangle,
\end{eqnarray*}
where we have used equation~\eqref{complexHtype} and $\nabla_{Z_s}Z_r=0$.

In fact the converse statement also holds.

\begin{lemma}\label{admvar}
Let $W$ be any $C^1$ vector field along $\gamma$ such that $W(\gamma(a))=W(\gamma(b))=0$ and that satisfies
$$0=\dot\gamma\langle W,Z_r\rangle-2\langle W_H,J_r(\dot\gamma)\rangle.$$
Then there exists an admissible variation $\gamma_\varepsilon$ of $\gamma$ such that
$$\left.\frac{\partial}{\partial\varepsilon}\right|_{\varepsilon=0}\gamma(s,\varepsilon)=W.$$
\end{lemma}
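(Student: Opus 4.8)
The plan is to construct the admissible variation explicitly by first producing a one-parameter family of curves whose initial velocity of variation is $W$ and then correcting it so that each curve remains horizontal with fixed endpoints. Since $W$ vanishes at $\gamma(a)$ and $\gamma(b)$, a naive starting point is the variation $\tilde\gamma_0(s,\varepsilon)=\exp_{\gamma(s)}(\varepsilon W(s))$ using the Riemannian exponential map of the metric $\langle\cdot,\cdot\rangle$ on $\mathbf{H}^1$; this already has fixed endpoints and satisfies $\partial_\varepsilon|_{\varepsilon=0}\tilde\gamma_0=W$, but the curves $s\mapsto\tilde\gamma_0(s,\varepsilon)$ need not be horizontal for $\varepsilon\neq0$. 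The heart of the argument is therefore a "horizontalization" procedure: starting from $\tilde\gamma_0$, modify it within the $Z_{\mathcal I},Z_{\mathcal J},Z_{\mathcal K}$ directions so as to kill the three functions $\langle\dot\gamma_\varepsilon,Z_r\rangle$ identically in $s$, while preserving the endpoint conditions and not altering the first-order term in $\varepsilon$.

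The key steps, in order, are: (1) Write the candidate variation in exponential-type coordinates adapted to the splitting $T\mathbf{H}^1=\mathcal{D}\oplus\spn\{Z_{\mathcal I},Z_{\mathcal J},Z_{\mathcal K}\}$, so that a curve is determined by its horizontal part together with three "vertical" functions $c_{\mathcal I}(s),c_{\mathcal J}(s),c_{\mathcal K}(s)$; because the distribution is bracket generating of step two, the commutator relations~\eqref{commHtype} express exactly how moving the horizontal component forces a prescribed change in the verticals, and conversely any desired vertical profile can be achieved by a suitable choice of the vertical functions since $\dot c_r$ appears. (2) Impose the horizontality equations $\langle\dot\gamma_\varepsilon,Z_r\rangle=0$ and observe that, for each fixed $\varepsilon$, this is a first-order ODE system in the unknown vertical corrections along $[a,b]$; solving it with the appropriate initial condition at $s=a$ gives a horizontal curve, and one checks that the value at $s=b$ is unchanged because $W$ satisfies the hypothesis $\dot\gamma\langle W,Z_r\rangle=2\langle W_H,J_r(\dot\gamma)\rangle$ — this is precisely the linearization at $\varepsilon=0$ of the endpoint constraint. (3) Verify that the correction is $O(\varepsilon^2)$, so that $\partial_\varepsilon|_{\varepsilon=0}\gamma(s,\varepsilon)=W$ still holds, and that the resulting map $\tilde\gamma:I_1\times I_2\to\mathbf{H}^1$ is $C^2$ by smooth dependence of ODE solutions on parameters.

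The main obstacle I expect is step (2): ensuring that after enforcing horizontality for all $\varepsilon$ in a neighborhood of $0$, the right endpoint is still $\gamma(b)$. The linearized computation displayed just before the lemma statement shows that the hypothesis on $W$ is exactly the first-order compatibility condition, so the derivative at $\varepsilon=0$ of the "endpoint defect" vanishes; one then wants to promote this to genuine endpoint-fixing for small $\varepsilon\neq0$. The clean way is an implicit-function-theorem argument: parametrize horizontal curves near $\gamma$ with left endpoint $\gamma(a)$ by their horizontal velocity (an open condition), define the endpoint map, and use that the hypothesis on $W$ places $W$ in the kernel of its differential in a way that lets one split off a genuine admissible direction; alternatively, one adds a small $\varepsilon^2$-order vertical bump supported in the interior of $[a,b]$ to absorb any residual mismatch, which works because the vertical directions are reachable by brackets of horizontal ones (Chow's theorem, Theorem~\ref{Chow}, applied locally). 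Either route gives the desired admissible variation, and the $\varepsilon^2$ nature of all corrections guarantees the first variation is $W$ as required.
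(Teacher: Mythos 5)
Your construction is correct in outline, but it is a genuinely different route from the one the paper takes. The paper stays on the two-dimensional surface $F(s,\varepsilon)=\exp_{\gamma(s)}(\varepsilon W(s))$ and argues that this surface is foliated by horizontal curves, which can then be written as reparametrized graphs $\gamma_\varepsilon(s)=\exp_{\gamma(s)}(g(s,\varepsilon)W(s))$; the endpoints are then fixed for free, because $W(\gamma(a))=W(\gamma(b))=0$ collapses the surface to a point over each endpoint, and the hypothesis on $W$ enters only to rule out the degenerate case where $W$ is horizontal on a subinterval (there it forces $W_H=0$, so the surface is transversal to $\mathcal D$ wherever it matters). You instead keep the horizontal projection of the naive variation and re-lift it horizontally, observing that the hypothesis is exactly the statement that the first-order vertical velocity defect $\left.\frac{d}{d\varepsilon}\right|_{\varepsilon=0}\langle\dot{\tilde\gamma}_0,Z_r\rangle=\dot\gamma\langle W,Z_r\rangle-2\langle W_H,J_r(\dot\gamma)\rangle$ vanishes (this is the computation displayed just before the lemma), so the vertical correction is $O(\varepsilon^2)$ pointwise and the variation field is still $W$. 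What your route buys is rigor at the point where the paper is most delicate: in corank $3$ a generic $2$-surface in $\mathbf{H}^1$ meets the rank-$4$ distribution only in $\{0\}$, so the claim that $F$ is foliated by horizontal curves requires justification, whereas your horizontal lift exists trivially in the group coordinates. The price you pay is the right-endpoint defect, which the paper never has to confront; your two proposed fixes (implicit function theorem for the endpoint map, or an $\varepsilon^2$-order vertical bump reachable by brackets) both work, but the first tacitly uses that $\gamma$ is a regular point of the endpoint map — harmless here since nonconstant singular curves do not occur for this step-two H-type structure, but worth stating. With that caveat made explicit, your argument is complete and arguably tighter than the one in the paper.
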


\begin{proof}
Let us decompose $W=f\dot\gamma+\widetilde W$, with $\widetilde W\bot\dot\gamma$ and $f(\gamma(a))=f(\gamma(b))=0$. With this definition, we have
$$\langle W,\dot\gamma\rangle=f,\quad\langle W,J_r(\dot\gamma)\rangle=\langle\widetilde W,J_r(\dot\gamma)\rangle,\quad\langle W,Z_r\rangle=\langle\widetilde W,Z_r\rangle.$$
Observe that the term $f\dot\gamma$ will not contribute to any admissible variation, therefore we can assume that $W\bot\dot\gamma$. Let $s\in I_1$ and $\varepsilon>0$ sufficiently small. Define the mapping
$$F(s,\varepsilon)=\exp_{\gamma(s)}(\varepsilon W(s)),$$
where $\exp$ is the exponential map associated to the metric $\langle\cdot,\cdot\rangle$ of $\mathbf{H}^1$.

If $W$ is horizontal in some nonempty interval $I\subset I_1$, then $W=W_H$ and also $\langle W_H,J_r(\dot\gamma)\rangle=\frac12\dot\gamma\langle W_H,Z_r\rangle=0$. This implies $W_H=\lambda(p)\dot\gamma$, but since $W_H\bot\dot\gamma$, then $W_H=0$.

If $W(s_0)$ is not horizontal, then $F(s,\varepsilon)$ defines locally a surface which is foliated by horizontal curves and it is transversal to the horizontal distribution, since it contains curves in nonhorizontal directions. This implies there exists a $C^2$ function $g(s,\varepsilon)$ such that
$$\gamma_\varepsilon(s)=\exp_{\gamma(s)}(g(s,\varepsilon)W(s))$$
is a horizontal curve. Choosing $g$ such that $\left.\dfrac{\partial}{\partial\varepsilon}\right|_{\varepsilon=0} f(s_0,\varepsilon)=1$, we obtain an admissible variation $\gamma_\varepsilon$ of $\gamma$ with associated vector field $W$.
\end{proof}

With this result at hand, we can formulate the main theorem of this section.

\begin{theo}\label{curvS7}
Let $\gamma:[a,b]\to {\mathbf H}^1$ be a horizontal curve, parameterized by arc length. Then $\gamma$ is a critical point of the length functional if and only if there exist $\lambda_{\mathcal I},\lambda_{\mathcal J},\lambda_{\mathcal K}\in\real$ satisfying the second order differential equation
\begin{equation}\label{curvS7eq}
\nabla_{\dot\gamma}\dot\gamma-2\sum_{r={\mathcal I},{\mathcal J},{\mathcal K}}\lambda_rJ_r(\dot\gamma)=0.
\end{equation}
\end{theo}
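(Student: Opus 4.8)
The plan is to carry out the standard first-variation computation for the length functional on horizontal curves, using Lemma~\ref{admvar} to convert the constrained variational problem into an unconstrained one via Lagrange multipliers. First I would fix an admissible variation $\gamma_\varepsilon$ of the arc-length parameterized horizontal curve $\gamma$, with associated variation field $W=W_0$ vanishing at the endpoints. Differentiating $\ell(\gamma_\varepsilon)=\int_a^b\|\dot\gamma_\varepsilon(s)\|\,ds$ at $\varepsilon=0$ and using that $[W,\dot\gamma]=0$ together with $\|\dot\gamma\|\equiv 1$, I get the usual expression
\begin{equation*}
\left.\frac{d}{d\varepsilon}\right|_{\varepsilon=0}\ell(\gamma_\varepsilon)=\int_a^b\langle\nabla_{\dot\gamma}W,\dot\gamma\rangle\,ds=-\int_a^b\langle W,\nabla_{\dot\gamma}\dot\gamma\rangle\,ds,
\end{equation*}
after integrating by parts and discarding the boundary term since $W$ vanishes at $a,b$. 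So $\gamma$ is a critical point precisely when $\int_a^b\langle W,\nabla_{\dot\gamma}\dot\gamma\rangle\,ds=0$ for every admissible variation field $W$.

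Next I would identify exactly which vector fields $W$ arise this way. By Lemma~\ref{admvar}, these are precisely the $C^1$ fields along $\gamma$ with $W(\gamma(a))=W(\gamma(b))=0$ satisfying the three constraint equations $\dot\gamma\langle W,Z_r\rangle-2\langle W_H,J_r(\dot\gamma)\rangle=0$ for $r=\mathcal I,\mathcal J,\mathcal K$. Writing $W=W_H+W_V$ with $W_V=\sum_r\langle W,Z_r\rangle Z_r$, and noting $\nabla_{\dot\gamma}\dot\gamma$ is horizontal (since $\langle\nabla_{\dot\gamma}\dot\gamma,Z_r\rangle=\dot\gamma\langle\dot\gamma,Z_r\rangle-\langle\dot\gamma,\nabla_{\dot\gamma}Z_r\rangle=0-\langle\dot\gamma,J_r(\dot\gamma)\rangle/... =0$ by horizontality of $\gamma$ and $\langle U,J_r(U)\rangle=0$), the criticality condition only sees $W_H$: it reads $\int_a^b\langle W_H,\nabla_{\dot\gamma}\dot\gamma\rangle\,ds=0$ whenever $W_H$ is the horizontal part of an admissible variation field. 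The task is therefore to show $\nabla_{\dot\gamma}\dot\gamma$ is $L^2$-orthogonal to the space of such horizontal parts if and only if $\nabla_{\dot\gamma}\dot\gamma=2\sum_r\lambda_r J_r(\dot\gamma)$ for constants $\lambda_r$.

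To do this I would proceed as in~\cite{RitRos, HR}: introduce scalar functions $h_r(s)=\langle W,Z_r\rangle$, which satisfy $h_r'=2\langle W_H,J_r(\dot\gamma)\rangle$ and $h_r(a)=h_r(b)=0$; conversely, given any $C^1$ functions $h_r$ with $h_r(a)=h_r(b)=0$ and any horizontal $W_H\perp\dot\gamma$ with $\langle W_H,J_r(\dot\gamma)\rangle=h_r'/2$, one can build an admissible $W$. Since $\{J_{\mathcal I}(\dot\gamma),J_{\mathcal J}(\dot\gamma),J_{\mathcal K}(\dot\gamma)\}$ together with $\dot\gamma$ spans $\mathcal D$ along $\gamma$ (they are orthonormal, as $J_r$ are orthogonal complex structures and pairwise anticommute in the quaternionic sense), the horizontal field $W_H$ decomposes as $W_H=\sum_r\frac{1}{2}h_r' \,J_r(\dot\gamma)+(\text{a free component }\phi\,\dot\gamma)$; but the component along $\dot\gamma$ contributes nothing, so effectively $W_H$ ranges over $\sum_r\frac12 h_r'J_r(\dot\gamma)$ with $h_r$ arbitrary vanishing at the endpoints, plus an arbitrary multiple of $\dot\gamma$. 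Plugging into the criticality condition: the $\dot\gamma$-component gives $\int_a^b\phi\,\langle\dot\gamma,\nabla_{\dot\gamma}\dot\gamma\rangle\,ds=0$ automatically (arc-length), and the remaining condition is $\sum_r\int_a^b h_r'(s)\,\langle J_r(\dot\gamma),\nabla_{\dot\gamma}\dot\gamma\rangle\,ds=0$ for all admissible $h_r$. Setting $c_r(s)=\langle J_r(\dot\gamma),\nabla_{\dot\gamma}\dot\gamma\rangle$, this says each $\int_a^b h_r'c_r\,ds=0$ for all $h_r$ vanishing at the endpoints, which by the fundamental lemma of the calculus of variations (du Bois-Reymond) forces $c_r\equiv\lambda_r$ constant. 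Writing $\nabla_{\dot\gamma}\dot\gamma=\sum_r c_r J_r(\dot\gamma)$ in the orthonormal basis $\{J_r(\dot\gamma)\}$ of $\dot\gamma^\perp\cap\mathcal D$ (using that $\nabla_{\dot\gamma}\dot\gamma\perp\dot\gamma$) then gives $\nabla_{\dot\gamma}\dot\gamma=\sum_r\lambda_r J_r(\dot\gamma)$; absorbing a factor $2$ into the multipliers yields~\eqref{curvS7eq}, and the converse is immediate by reversing the computation. The main obstacle I anticipate is the careful bookkeeping in the decomposition step — verifying that $\{\dot\gamma, J_{\mathcal I}(\dot\gamma), J_{\mathcal J}(\dot\gamma), J_{\mathcal K}(\dot\gamma)\}$ is genuinely an orthonormal frame for $\mathcal D$ along $\gamma$ (this uses $\|\dot\gamma\|=1$, the anti-symmetry~\eqref{complexHtype}, and the quaternionic relations among the $J_r$), and making the ``conversely, one can build $W$'' direction of Lemma~\ref{admvar}-type reasoning fully rigorous when the $h_r'$ are prescribed.
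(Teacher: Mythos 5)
Your proposal is correct and follows essentially the same route as the paper: the first-variation formula, the observation that $\nabla_{\dot\gamma}\dot\gamma$ is orthogonal to $\dot\gamma$ and to the $Z_r$ hence lies in $\spn\{J_r(\dot\gamma)\}$, and a fundamental-lemma argument over test variations built from Lemma~\ref{admvar} to force the coefficients $\langle\nabla_{\dot\gamma}\dot\gamma,J_r(\dot\gamma)\rangle$ to be constant. The paper phrases the last step with mean-zero functions $f_r$ and $\tilde U_H=\sum_r f_rJ_r(\dot\gamma)$ rather than your $h_r'$ and du Bois--Reymond, but these are the same argument; your flagged worry about $\{\dot\gamma,J_{\mathcal I}(\dot\gamma),J_{\mathcal J}(\dot\gamma),J_{\mathcal K}(\dot\gamma)\}$ being orthonormal is a genuine (and easily verified) ingredient that the paper also uses implicitly.
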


\begin{proof}
Let $\gamma:I=[a,b]\to {\mathbf H}^1$ be a horizontal curve, parameterized by arc length, and let $\gamma_\varepsilon$ be an admissible variation of $\gamma$, with vector field $U$. The first variation of the length functional, see~\cite{CE}, is given by
\begin{equation}\label{1stvar}
\left.\frac{d}{d\varepsilon}\right|_{\varepsilon=0}L(\gamma_\varepsilon)= -\int_I\langle\nabla_{\dot\gamma}\dot\gamma,U\rangle.
\end{equation}
Suppose $\gamma$ is a critical point of the first variation, that is
$$\int_I\langle\nabla_{\dot\gamma}\dot\gamma,U\rangle=0.$$

The condition $\|\dot\gamma\|=1$ implies $\langle\nabla_{\dot\gamma}\dot\gamma,\dot\gamma\rangle=0$. Since $\langle\dot\gamma,Z_r\rangle=0$ for $r={\mathcal I},{\mathcal J},{\mathcal K}$, we have
\begin{eqnarray*}
0=\dot\gamma\langle\dot\gamma,Z_r\rangle&=&\langle\nabla_{\dot\gamma}\dot\gamma,Z_r\rangle+\langle\dot\gamma,\nabla_{\dot\gamma}Z_r\rangle\\
&=&\langle\nabla_{\dot\gamma}\dot\gamma,Z_r\rangle+\langle\dot\gamma,J_r(\dot\gamma)\rangle\\
&=&\langle\nabla_{\dot\gamma}\dot\gamma,Z_r\rangle.
\end{eqnarray*}
Therefore, counting dimensions
\begin{equation}
\nabla_{\dot\gamma}\dot\gamma=\displaystyle{\sum_{r={\mathcal I},{\mathcal J},{\mathcal K}}g_r(\gamma)J_r(\dot\gamma)}.
\end{equation}
In order to prove that the functions $g_r$ are constant, fix three $C^1$ functions $f_{\mathcal I},f_{\mathcal J},f_{\mathcal K}:I\to\real$ such that $f_r(a)=f_r(b)=0$ and $\ds{\int_If_r}=0$ for $r={\mathcal I},{\mathcal J},{\mathcal K}$. Consider a vector field $\tilde U$ such that $\tilde{U}_H=\sum_{r={\mathcal I},{\mathcal J},{\mathcal K}}f_rJ_r(\dot\gamma)$ and $\langle\tilde{U},Z_r\rangle(s)=2\int_a^sf_r(t)dt$. We claim that $\tilde{U}$ satisfies
$$\dot\gamma\langle\tilde{U},Z_r\rangle=2\langle\tilde{U}_H,J_r(\dot\gamma)\rangle,$$
for ${r={\mathcal I},{\mathcal J},{\mathcal K}}$. To see this, observe that
$$\dot\gamma\langle\tilde{U},Z_r\rangle=\frac{d}{ds}\left(2\int_a^sf_r(t)dt\right)=2f_r(s)$$
and also
$$2\langle\tilde{U}_H,J_r(\dot\gamma)\rangle=2\left\langle\sum_{s={\mathcal I},{\mathcal J},{\mathcal K}}f_sJ_s(\dot\gamma),J_r(\dot\gamma)\right\rangle=
2f_r(s).$$

Thus, by Lemma~\ref{admvar}, we can conclude that $\tilde{U}$ is a vector field for an admissible variation of $\gamma$. By the variational identity~\eqref{1stvar}, we obtain the equality
$$0=\int_I\langle\nabla_{\dot\gamma}\dot\gamma,\tilde{U}\rangle=\sum_{r={\mathcal I},{\mathcal J},{\mathcal K}}\int_If_r\langle\nabla_{\dot\gamma}\dot\gamma,J_r(\dot\gamma)\rangle,$$
which is valid for any three functions with mean zero. This implies that the functions $\langle\nabla_{\dot\gamma}\dot\gamma,J_r(\dot\gamma)\rangle$ are constant, and thus we obtain equation~\eqref{curvS7eq}, for suitable constants $\lambda_{\mathcal I},\lambda_{\mathcal J},\lambda_{\mathcal K}\in\real$.

Conversely, let us assume that $\gamma$ is a horizontal curve, such that $\|\dot\gamma\|=1$ and it satisfies the differential equation~\eqref{curvS7eq}, for some $\lambda_{\mathcal I},\lambda_{\mathcal J},\lambda_{\mathcal K}\in\real$. We need to show that
$$\int_I\langle\nabla_{\dot\gamma}\dot\gamma,U\rangle=0$$
for any $C^1$-smooth vector field $U$, vanishing at the endpoints of $\gamma$ and sa\-tisfying
$$\dot\gamma\langle U,Z_r\rangle=2\langle U_H,J_r(\dot\gamma)\rangle,$$
where $r={\mathcal I},{\mathcal J},{\mathcal K}$.

Let us write $U=U_H+U_V=\displaystyle{U_H+\sum_{r={\mathcal I},{\mathcal J},{\mathcal K}}g_rZ_r}$, where $g_r(\gamma(a))=g_r(\gamma(b))=0$, then
\begin{eqnarray*}
\int_I\langle\nabla_{\dot\gamma}\dot\gamma,U\rangle&=&-2\sum_{r={\mathcal I},{\mathcal J},{\mathcal K}}\lambda_r\int_I\langle J_r(\dot\gamma),U\rangle=
-2\sum_{r={\mathcal I},{\mathcal J},{\mathcal K}}\lambda_r\int_I\langle J_r(\dot\gamma),U_H\rangle\\
&=&-\sum_{r={\mathcal I},{\mathcal J},{\mathcal K}}\lambda_r\int_I\dot\gamma\langle U,Z_r\rangle=
-\sum_{r={\mathcal I},{\mathcal J},{\mathcal K}}\lambda_r\int_I\dot\gamma\langle U_V,Z_r\rangle\\
&=&-\sum_{r={\mathcal I},{\mathcal J},{\mathcal K}}\lambda_r\int_I\dot\gamma(g_r)=-\sum_{r={\mathcal I},{\mathcal J},{\mathcal K}}\lambda_r\int_a^b\frac{d}{dt}(g_r(\gamma(t)))=0.
\end{eqnarray*}
\end{proof}

\section{The intrinsic sub-Laplacian for $S^7$ with growth vector $(6,1)$}

In \cite{ABGR} the authors presented an intrinsic form of the sub-Laplacian, by means of Popp's measure $\mu_{sR}$, introduced in \cite{M}. The aim of this section is to construct this differential operator for the case of $S^7$ endowed with the contact distribution, introduced in Section 3.

\subsection{Construction of the intrinsic sub-Laplacian}

Let $(M,\dist,\langle\cdot,\cdot\rangle_{sR})$ be a sub-Riemannian manifold, where $\dist$ is a regular distribution. The basic idea is to define the intrinsic sub-Laplacian $\Delta_{sR}f$ of a function $f:M\to\real$ of class $C^2$, in analogy to the Riemannian case. To do this, let us define the horizontal gradient $\nabla_{sR}f$ by the equation
\begin{equation}
\langle\nabla_{sR}f(p),v\rangle_{sR}=d_pf(v),
\end{equation}
and the sub-Riemannian divergence ${\rm{div}}_{sR}X$ of a horizontal vector field $X$ by
\begin{equation}
{\rm{div}}_{sR}X\mu_{sR}=L_X\mu_{sR},
\end{equation}
where $\mu_{sR}\in\bigwedge^n(T^*M)$ is a fixed non-vanishing $n-$form, known as Popp's volume form, and $L_X$ denotes the Lie derivative in the direction of $X$. The intrinsic sub-Laplacian is given by
\begin{equation}\label{defLap}
\Delta_{sR}f={\rm{div}}_{sR}(\nabla_{sR}f).
\end{equation}
For full details about its construction, see~\cite{ABGR,M}.

\paragraph{\bf Remark:} In the Riemannian case this definition coincides with the classical definition of the Laplacian, see for example \cite{Ro}. As pointed out in~\cite{ABGR}, the regularity hypothesis over the distribution cannot be avoided since for example, in the case of the Grushin plane, the operator~\eqref{defLap} is not hypoelliptic.

Let $\{X_1,\ldots,X_k\}$ be a local orthonormal basis of $\dist\subset TM$ and consider the corresponding dual basis $\{dX_1,\ldots,dX_k\}$. It is possible to find vector fields $\{X_{k+1},\ldots,X_n\}$ such that $\spn\{X_1,\ldots,X_n\}=TM$ and such that Popp's volume form is locally given by
\begin{equation}
\mu_{sR}=dX_1\wedge \ldots\wedge dX_k\wedge dX_{k+1}\wedge\ldots\wedge dX_n.
\end{equation}

In this setting, the sub-Laplacian $\Delta_{sR}f$ can be written explicitly as
\begin{equation}\label{expsubLap}
\Delta_{sR}f=\sum_{r=1}^k\left(L_{X_r}^2f+L_{X_r}f\sum_{s=1}^ndX_s([X_r,X_s])\right).
\end{equation}

\subsection{Examples}

The case of the intrinsic sub-Laplacian for $S^3$ is implied by the following result, characterizing Popp's volume form for contact manifolds of dimension 3.

\begin{prop}[\cite{ABGR,M}]\label{contpopp}
Let $M$ be a three dimensional orientable contact manifold with a sub-Riemannian metric defined on its contact distribution. Let $\{X_1,X_2\}$ a local orthonormal frame for its contact distribution. Let $X_3=[X_1,X_2]$ and $\{dX_1,dX_2,dX_3\}$ be the dual basis to $\{X_1,X_2,X_3\}$. Then the form $dX_1\wedge dX_2\wedge dX_3$ is an intrinsic volume form.
\end{prop}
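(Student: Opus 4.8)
The plan is to verify directly that the form $\nu := dX_1\wedge dX_2\wedge dX_3$, built from a local orthonormal frame $\{X_1,X_2\}$ of the contact distribution together with $X_3=[X_1,X_2]$, coincides with Popp's volume form $\mu_{sR}$ as constructed in the general recipe of \cite{M,ABGR}; the point being that Popp's construction is canonical but uses an auxiliary metric on the nilpotentized fibers, whereas the stated form is given by an explicit and easily computable frame. First I would recall the general construction of Popp's measure at a point $p$ in the step-two case: one takes the associated graded vector space $\mathrm{gr}_p(\dist) = \dist_p \oplus (T_pM/\dist_p)$, puts on $\dist_p$ the given sub-Riemannian inner product, and on the quotient $T_pM/\dist_p$ the inner product induced by pushing forward the bracket map $\beta\colon\Lambda^2\dist_p \to T_pM/\dist_p$, $\beta(v\wedge w) = [\,\widetilde v,\widetilde w\,] \bmod \dist_p$. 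Popp's volume is then the pullback of the canonical volume on this graded Euclidean space. In dimension three with $\dim\dist = 2$, the quotient is one-dimensional, so the induced inner product amounts to a single positive number, namely $\|\beta(X_1\wedge X_2)\|^2$ relative to a chosen generator of the line $T_pM/\dist_p$.

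The key computation is then to identify that induced norm. Choosing $X_3 = [X_1,X_2]$ as the distinguished lift, its class $\bar X_3 = X_3 \bmod \dist_p$ generates $T_pM/\dist_p$, and by definition $\beta(X_1\wedge X_2) = \bar X_3$. The inner product on the quotient is defined precisely so that $\beta$ maps the unit volume element $X_1\wedge X_2$ of $\Lambda^2\dist_p$ (which has norm one since $\{X_1,X_2\}$ is orthonormal) to a unit vector; hence $\bar X_3$ has norm one, i.e. $\{X_1, X_2, \bar X_3\}$ is an orthonormal basis of $\mathrm{gr}_p(\dist)$. Therefore the canonical volume on the graded space is $dX_1\wedge dX_2\wedge d\bar X_3$, and pulling back under the linear isomorphism $T_pM \to \mathrm{gr}_p(\dist)$ determined by the frame $\{X_1,X_2,X_3\}$ gives exactly $dX_1\wedge dX_2\wedge dX_3$ at $p$. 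Since $p$ was arbitrary and the frame is smooth, this shows $\mu_{sR} = dX_1\wedge dX_2\wedge dX_3$ locally, which is the assertion.

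I should also address well-definedness and the global claim: the form produced this way is independent of the choice of orthonormal frame $\{X_1,X_2\}$ up to sign, because a change of orthonormal frame is given pointwise by an element of $O(2)$, which acts on $X_1\wedge X_2$ by $\pm 1$ and correspondingly rescales $X_3=[X_1,X_2]$ by $\pm 1$, leaving $dX_1\wedge dX_2\wedge dX_3$ invariant up to sign; on an orientable $M$ one can patch the local forms into a global nonvanishing volume form, so it is legitimately called \emph{an} intrinsic volume form. The main obstacle, and the only genuinely delicate point, is getting the normalization in Popp's construction exactly right — that is, confirming that the metric on $T_pM/\dist_p$ is the \emph{pushforward} metric under $\beta$ (making $\beta$ a coisometry on the orthogonal complement of its kernel) rather than some other natural candidate, and tracking that this forces $\|\bar X_3\| = 1$ rather than, say, $\|[X_1,X_2]\|$ measured in an ambient metric. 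Once that normalization is pinned down against the definition in \cite{ABGR,M}, the rest is a one-line linear-algebra identification.
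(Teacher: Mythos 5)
The paper does not prove this proposition at all: it is stated with the attribution \([\)ABGR, M\(]\) and used as a black box, so there is no in-paper argument to compare yours against. Your proposal supplies the missing proof, and it is correct: it follows the standard construction of Popp's measure for a step-two distribution, which is exactly how the cited references establish the claim. The two points you identify as delicate are handled properly. First, the quotient metric on $T_pM/\dist_p$ is indeed the pushforward under $\beta\colon\Lambda^2\dist_p\to T_pM/\dist_p$ restricted to $(\ker\beta)^\perp$; since $\dist$ is contact, $\beta$ is an isomorphism of one-dimensional spaces (this is also what guarantees $[X_1,X_2]\notin\dist$, so that $\{X_1,X_2,X_3\}$ is genuinely a frame and the dual basis exists), and the unit vector $X_1\wedge X_2$ maps to $\bar X_3$, forcing $\|\bar X_3\|=1$. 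Second, the top form is independent of the choice of lift of $\bar X_3$, as your pullback argument implicitly requires. One small refinement: under a change of orthonormal frame by $A\in O(2)$ the horizontal factor $dX_1\wedge dX_2$ transforms by $\det A$ and $X_3=[X_1,X_2]$ transforms by $\det A$ modulo $\dist$, so the $3$-form is multiplied by $(\det A)^{2}=1$; it is therefore \emph{exactly} invariant, not merely invariant up to sign, which is why the local forms patch to a global volume form (orientability of $M$ being automatic from the existence of such a form). This is consistent with, and slightly sharpens, your closing remark.
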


In particular, for the sphere $S^3$ endowed with the contact distribution generated by the globally defined vector fields~\eqref{distS3}, with commutator
$$[X,Y](x)=2V(x)=2(-x_1 \partial_{x_0}+x_0 \partial_{x_1}-x_3\partial_{x_2}+x_2\partial_{x_3}),$$
Popp's volume form, as constructed above, is $2dX\wedge dY\wedge dV$, and the intrinsic sub-Laplacian is given by
$$\Delta_{sR}f=(X^2+Y^2)f.$$

In general, we can extend the previous result to construct locally Popp's volume form over contact manifolds of arbitrary dimension. Let $M$ be a contact manifold of dimension $2n+1$, with contact form $\omega$ and contact distribution $\xi=\ker\omega$. The distribution $\xi$ is bracket generating of step two, see~\cite{GM}. Assume that $M$ has a Riemannian metric $g$ such that, in a neighborhood of each $p\in M$, there is an orthonormal basis $B=\{v_1,\ldots,v_{2n},v_{2n+1}\}$ for $T_pM$ satisfying $\xi_p=\spn\{v_1,\ldots,v_{2n}\}$. Following the construction in \cite{M} we have that Popp's volume form in this case is given locally by
\begin{equation}
\mu_{sR}=\pi_1\wedge\ldots\wedge\pi_{2n+1},
\end{equation}
where $B^*=\{\pi_1,\ldots,\pi_{2n+1}\}$ is the dual basis for $B$.

In the case of the contact structure of $S^7$, let us consider the vector fields $X_1,\ldots,X_7$ presented in the Appendix. Since the vector fields $X_1$ and $V_4$ from equation~\eqref{defV} coincide, the contact distribution on $S^7$ introduced in Section 3 corresponds to
$$\dist=\ker\omega=\spn\{X_2,\ldots,X_7\}.$$
In this context we have the following

\begin{theo}
Let $\dist$ be the contact distribution for $S^7$ and $\langle\cdot,\cdot\rangle_{sR}$ the restriction of the usual Riemannian metric in $\real^8$ to $\dist$. Then the intrinsic sub-Laplacian of $(S^7,\dist,\langle\cdot,\cdot\rangle_{sR})$ is given by the sum of squares
$$\Delta_{sR}=\sum_{a=2}^7X_a^2.$$
\end{theo}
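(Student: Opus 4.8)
The plan is to apply the explicit formula~\eqref{expsubLap} for the intrinsic sub-Laplacian to the orthonormal frame $\{X_2,\ldots,X_7\}$ of $\dist$ and the complementary vector field $X_1=V_4$, and show that all the first-order correction terms cancel. Concretely, by~\eqref{expsubLap} with $k=6$, $n=7$ (relabelling so the distribution indices run $2,\ldots,7$ and the single complementary index is $1$), we have
\begin{equation*}
\Delta_{sR}f=\sum_{r=2}^7\left(L_{X_r}^2 f+L_{X_r}f\sum_{s=1}^7 dX_s([X_r,X_s])\right),
\end{equation*}
so it suffices to prove that for each $r\in\{2,\ldots,7\}$ the coefficient $\sum_{s=1}^7 dX_s([X_r,X_s])$ vanishes. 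First I would verify, using the explicit vector fields $X_1,\ldots,X_7$ from the Appendix, that $\{X_1,\ldots,X_7\}$ is a global orthonormal frame for $TS^7$, and that Popp's volume form with respect to this frame is (up to a nonzero constant) $\pi_1\wedge\cdots\wedge\pi_7$ where $\{\pi_i\}$ is the dual coframe — this is exactly the general contact statement recalled just before the theorem, specialized to $2n+1=7$, so it is a direct citation rather than new work.

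The heart of the computation is the structure of the Lie brackets $[X_r,X_s]$ for $r,s\in\{2,\ldots,7\}$, together with $[X_r,X_1]$. Since the distribution is contact of step two, each bracket $[X_r,X_s]$ decomposes as a horizontal part (a combination of $X_2,\ldots,X_7$) plus a multiple of $X_1$. The term $dX_1([X_r,X_s])$ measures the vertical component and contributes to the curvature/contact form; the terms $dX_s([X_r,X_s])$ for $s\ge 2$ pick out the horizontal diagonal-type components. I would compute all these brackets explicitly from the formulas for $X_2,\ldots,X_7$ (these are the left-invariant type vector fields coming from the octonionic/Clifford structure on $\real^8$, analogous to~\eqref{distS3} in the $S^3$ case). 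The key algebraic fact I expect to emerge is \emph{skew-symmetry}: each $[X_r,X_s]$, when expanded in the frame, has no $X_r$ or $X_s$ component, so $dX_r([X_r,X_s])=dX_s([X_r,X_s])=0$ for the relevant indices, and moreover the only nonzero $dX_a([X_r,X_s])$ with $a\in\{2,\ldots,7\}$ occur for $a\notin\{r,s\}$ and these get cancelled in pairs when summed against a fixed $r$ over all $s$. Equivalently, one shows $\sum_{s=2}^7 dX_s([X_r,X_s])=0$ because $[X_r,X_s]$ is "off-diagonal'' with respect to $s$, and $dX_1([X_r,X_1])=0$ because $X_1$ is the Reeb-type field whose bracket with horizontal fields stays horizontal — here one uses that the $S^1$-action is by isometries, so $V_4=X_1$ generates a flow preserving the metric and the distribution, forcing $[X_1,X_r]\in\dist$ and hence $dX_1([X_r,X_1])=-dX_1([X_1,X_r])=0$.

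Assembling these: for each fixed $r$, $\sum_{s=1}^7 dX_s([X_r,X_s]) = dX_1([X_r,X_1]) + \sum_{s=2}^7 dX_s([X_r,X_s]) = 0 + 0 = 0$, whence $\Delta_{sR}f=\sum_{r=2}^7 L_{X_r}^2 f=\sum_{a=2}^7 X_a^2 f$, which is the claim. The main obstacle is purely computational: one must have the explicit frame $X_1,\ldots,X_7$ in hand and carry out the six-by-six table of brackets carefully enough to confirm the off-diagonal/skew-symmetry pattern. There is no conceptual difficulty beyond organizing this bookkeeping, and the contact-geometry input (Popp's form, step-two bracket-generation) is already supplied by the cited results; the one place to be careful is checking that the chosen complement $X_1$ really is the one making Popp's form the wedge of the dual coframe, i.e. that $X_1$ is correctly normalized relative to the curvature of the contact structure — but since $[X,Y]$-type brackets here produce $\pm 2 X_1$ as in the $S^3$ case, the constant is $2^{?}$ and drops out of the divergence computation, so it does not affect the final operator.
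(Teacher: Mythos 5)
Your proposal is correct and follows essentially the same route as the paper: identify Popp's form with $dX_1\wedge\cdots\wedge dX_7$ and then kill the first-order terms in~\eqref{expsubLap} by checking $dX_s([X_r,X_s])=0$ for all relevant indices. The only differences are cosmetic --- you justify the $s=1$ case conceptually via the isometric $S^1$-action rather than by direct computation, and your remark about terms ``cancelling in pairs'' is superfluous, since each summand $dX_s([X_r,X_s])=\langle X_s,[X_r,X_s]\rangle_{sR}$ vanishes individually, exactly as the paper records.
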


\begin{proof}
The construction of Popp's measure leads to the globally defined $n$ form
$$\mu_{sR}=dX_1\wedge\ldots\wedge dX_7,$$
which is precisely the Riemannian volume form of $S^7$. Simple calculations show that
\begin{equation}\label{ortcomm}
dX_b([X_a,X_b])=\langle X_b,[X_a,X_b]\rangle_{sR}=0,\quad a=2,\ldots,7\quad b=1,\ldots7.
\end{equation}

The theorem follows from formula~\eqref{expsubLap}.
\end{proof}

\paragraph{\bf Remark:} A complete list of the commutators $[X_a,X_b]$, for $a<b$, can be found in~\cite[Section 8]{GM}. This list can be used to check equation~\eqref{ortcomm} directly.

\section{Heat operator for $S^7$ with growth vector $(6,1)$}

The aim of this section is to show that the above constructed operator $\Delta_{sR}$ commutes with the operator $X_1^2$. A similar observation was exploited to study the heat operator for the sub-Riemannian structure of $SU(2)\cong S^3$ in~\cite{BB}.

The main result of this Section is formulated as follows.

\begin{theo}\label{commy1}
The operators $\Delta_{sR}$ and $X_1^2$ commute.
\end{theo}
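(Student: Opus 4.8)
The plan is to show $[\Delta_{sR},X_1^2]=0$ by reducing it to commutator identities among the vector fields $X_1,\ldots,X_7$ on $S^7$. Since $\Delta_{sR}=\sum_{a=2}^7 X_a^2$, we have
$$[\Delta_{sR},X_1^2]=\sum_{a=2}^7\big(X_a^2X_1^2-X_1^2X_a^2\big),$$
and the standard operator identity $[AB,CD]$-expansion reduces everything to expressions built from $[X_a,X_1]$ and its iterated brackets with the $X_b$'s. So the first step is to recall from \cite[Section 8]{GM} the full table of commutators $[X_a,X_b]$; the key structural feature I expect (and would verify) is that $X_1=V_4$ is, up to normalization, the Reeb field of the contact structure and generates a one-parameter group acting as isometries on the orthonormal frame $\{X_2,\ldots,X_7\}$. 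Concretely, I would establish that bracketing with $X_1$ permutes the horizontal frame by an infinitesimal rotation: there are constants $c_{ab}$ with $[X_1,X_a]=\sum_{b=2}^7 c_{ab}X_b$, where $(c_{ab})$ is skew-symmetric (this is where the CR/contact structure enters — $X_1$ acts as a derivation preserving $\dist$ and the metric on it).

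Granting that, the second step is the algebra: if $\mathrm{ad}_{X_1}$ acts on $\spn\{X_2,\ldots,X_7\}$ as a skew-symmetric matrix $C=(c_{ab})$, then for any function $f$,
$$X_1\Big(\sum_{a=2}^7 X_a^2 f\Big)-\sum_{a=2}^7 X_a^2\big(X_1 f\big)=\sum_{a=2}^7\big([X_1,X_a]X_a+X_a[X_1,X_a]\big)f=\sum_{a,b}c_{ab}\big(X_bX_a+X_aX_b\big)f,$$
and the last sum vanishes because $c_{ab}$ is antisymmetric while $X_bX_a+X_aX_b$ is symmetric in $(a,b)$. Hence $[X_1,\Delta_{sR}]=0$ as first-order-in-$X_1$ operators, i.e. $X_1\Delta_{sR}=\Delta_{sR}X_1$ exactly. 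Then $[\Delta_{sR},X_1^2]=[\Delta_{sR},X_1]X_1+X_1[\Delta_{sR},X_1]=0$ follows immediately. I would present this cleanly as: first prove the \emph{stronger} statement that $\Delta_{sR}$ commutes with $X_1$ itself, then deduce the claim about $X_1^2$ as a one-line corollary.

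The main obstacle is Step 1 — verifying that $\mathrm{ad}_{X_1}$ really does preserve $\dist$ and acts skew-symmetrically on it. This is not automatic from the abstract contact picture; it uses the specific algebraic form of the $X_a$'s coming from the $Sp(1)$ (or rather the full quaternionic) symmetry of $S^7\subset\quat^2$, together with the explicit commutator list in \cite{GM}. I would organize the check by noting that $X_1$ corresponds to the $\mathfrak{su}(1)\subset\mathfrak{sp}(1)$-action $p\mapsto i\cdot p$ on the left, which commutes with the right $Sp(1)$-action used to build the frame, so bracketing with $X_1$ amounts to an honest infinitesimal isometry of $S^7$ fixing each fiber of the Hopf map — in particular it rotates the horizontal $6$-plane. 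Once $[X_1,X_a]=\sum_b c_{ab}X_b$ with $C+C^T=0$ is in hand, the rest is the formal computation above. As a sanity check I would confirm the analogous statement for $S^3$ already implicit in Section~7, where $[V,X]$ and $[V,Y]$ close up on $\spn\{X,Y\}$ skew-symmetrically and hence $V$ commutes with $X^2+Y^2$, exactly as in \cite{BB}.
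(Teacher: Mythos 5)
Your reduction of the claim to the stronger statement $[\Delta_{sR},X_1]=0$ is sound, and the paper's proof in fact establishes that stronger statement too. But your route to it contains a genuine gap: Step 1 is false as stated. The frame $X_1,\dots,X_7$ on $S^7$ comes from octonion multiplication and is \emph{not} a Lie algebra frame (unlike on $S^3$), so $[X_1,X_a]$ is not a constant-coefficient combination of the $X_b$'s. A direct computation from the Appendix gives
$$[X_1,X_2]=2\left(x_3\partial_{x_0}+x_2\partial_{x_1}-x_1\partial_{x_2}-x_0\partial_{x_3}+x_7\partial_{x_4}+x_6\partial_{x_5}-x_5\partial_{x_6}-x_4\partial_{x_7}\right),$$
and hence $\langle [X_1,X_2],X_3\rangle=-2(x_0^2+x_1^2+x_2^2+x_3^2)+2(x_4^2+\cdots+x_7^2)$, which is manifestly non-constant on $S^7$. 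What survives of your structural claim is only this: $X_1$ is a Killing field and $[X_1,X_a]$ is horizontal (this is the paper's equation for $dX_b([X_a,X_b])$ with $b=1$), so one may write $[X_1,X_a]=\sum_{b=2}^7 c_{ab}(x)X_b$ with $c_{ab}(x)=-c_{ba}(x)$ \emph{pointwise}. Feeding that into your computation, the antisymmetric-versus-symmetric cancellation still kills $\sum_{a,b}c_{ab}(X_bX_a+X_aX_b)$, but because the $c_{ab}$ are functions you pick up an extra first-order remainder $\sum_{b}\bigl(\sum_a X_a c_{ab}\bigr)X_b$, and your argument gives no reason for it to vanish. That divergence-type identity is exactly the nontrivial content you would still have to verify, so the proof is incomplete at its central step.

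For comparison, the paper sidesteps the frame computation entirely: it introduces toral coordinates $(\xi_1,\dots,\xi_4,\eta_1,\eta_2,\psi)$ on $S^7$, observes that $X_1=\partial_{\xi_1}+\partial_{\xi_2}+\partial_{\xi_3}+\partial_{\xi_4}$, and checks that the coefficients of $\Delta_{sR}$ in these coordinates are independent of $\xi_1,\dots,\xi_4$; commutation with $X_1$, and hence with $X_1^2$, is then immediate. Your sanity check for $S^3$ is correct precisely because there the frame is left-invariant with constant structure constants; that is the feature that does not carry over to the octonionic frame on $S^7$, and it is why a coordinate (or symmetry-group) argument is used instead.
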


\begin{proof}
Let us introduce the following change of coordinates for $S^7$:
\begin{equation}
\begin{array}{ccc}
x_0+ix_1&=&e^{i\xi_1}\cos\eta_1\cos\psi\\
x_2+ix_3&=&e^{i\xi_2}\sin\eta_1\cos\psi\\
x_4+ix_5&=&e^{i\xi_3}\cos\eta_2\sin\psi\\
x_6+ix_7&=&e^{i\xi_4}\sin\eta_2\sin\psi
\end{array}
\end{equation}

By the chain rule, the symbol of the sub-Laplacian $\Delta_{sR}=X_2^2+\ldots+X_7^2$ is a quadratic form with matrix
$$\left(\begin{array}{ccccccc}h_1(\eta_1,\psi)&-1&-1&-1&0&0&0\\-1&h_2(\eta_1,\psi)&-1&-1&0&0&0\\-1&-1&h_3(\eta_2,\psi)&-1&0&0&0\\
-1&-1&-1&h_4(\eta_2,\psi)&0&0&0\\0&0&0&0&\sec^2\psi&0&0\\0&0&0&0&0&\csc^2\psi&0\\0&0&0&0&0&0&1\end{array}\right),$$
where the coefficient functions $h_1,h_2,h_3$ and $h_4$ are given by

$$h_1(\eta_1,\psi)=-\frac{\sec^2(\eta_1)\sec^2(\psi)}8\Big(-6+2\cos(2\eta_1)+\cos(2(\eta_1-\psi))+$$
$$+2\cos(2\psi)+\cos(2(\eta_1+\psi))\Big),$$

$$h_2(\eta_1,\psi)=\frac{\csc^2(\eta_1)\sec^2(\psi)}8\Big(6+2\cos(2\eta_1)+\cos(2(\eta_1-\psi))-$$
$$-2\cos(2\psi)+\cos(2(\eta_1+\psi))\Big),$$

$$h_3(\eta_2,\psi)=\frac{\sec^2(\eta_2)\csc^2(\psi)}8\Big(6-2\cos(2\eta_2)+\cos(2(\eta_2-\psi))+$$
$$+2\cos(2\psi)+\cos(2(\eta_2+\psi))\Big),$$

$$h_4(\eta_2,\psi)=-\frac{\csc^2(\eta_2)\csc^2(\psi)}8\Big(-6-2\cos(2\eta_2)+\cos(2(\eta_2-\psi))-$$
$$-2\cos(2\psi)+\cos(2(\eta_2+\psi))\Big).$$

Observe that $h_1,\ldots,h_4$ are independent of $\xi_1,\ldots,\xi_4$. On the other hand, the vector field $X_1$, written in the new coordinates, becomes
$$X_1=\partial_{\xi_1}+\partial_{\xi_2}+\partial_{\xi_3}+\partial_{\xi_4}.$$

Since the coefficients of $\Delta_{sR}$ are independent of the variables $\xi_1,\xi_2,\xi_3$ and $\xi_4$, it is clear that the operators $\Delta_{sR}$ and $X_1$ commute. The Theorem follows.
\end{proof}

Let us denote by $e^{-t\Delta_{sR}}$ the semigroup of operators acting on $L^2_{\mu_{sR}}$, with infinitesimal generator $\Delta_{sR}$. The operator $e^{-t\Delta_{sR}}$ is known as the sub-Riemannian heat operator. As a consequence of Theorem~\ref{commy1}, we get the announced result.

\begin{coro}\label{heat}
Denoting by $\Delta_{S^7}$ the Laplace-Beltrami operator in $S^7$ with respect to the usual Riemannian structure, we have that
$$e^{-t\Delta_{S^7}}=e^{-t(\Delta_{sR}+X_1^2)}=e^{-t\Delta_{sR}}e^{-tX_1^2}.$$
\end{coro}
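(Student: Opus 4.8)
The plan is to derive the corollary from Theorem~\ref{commy1} together with the decomposition of the Laplace--Beltrami operator of $S^7$. First I would recall that the Riemannian volume form of $S^7$ coincides with Popp's volume form $\mu_{sR}$, as established in the proof of the theorem in Section~7, so the natural $L^2$-space for $\Delta_{S^7}$ and for $\Delta_{sR}$ is the same Hilbert space $L^2_{\mu_{sR}}(S^7)$. Next, since $\{X_1,\ldots,X_7\}$ is a global orthonormal frame for $TS^7$ and $\mu_{sR}=dX_1\wedge\cdots\wedge dX_7$ is the associated volume form, the Laplace--Beltrami operator expressed via this frame is $\Delta_{S^7}=\sum_{a=1}^7 X_a^2$ up to first-order correction terms $\sum_{a,b} dX_b([X_a,X_b])X_a$; but by equation~\eqref{ortcomm} (extended to include $a=1$, which follows from the same commutator list in~\cite[Section 8]{GM}) all these correction terms vanish, so $\Delta_{S^7}=\sum_{a=1}^7 X_a^2=\Delta_{sR}+X_1^2$. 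This gives the first equality in the corollary.

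For the second equality I would invoke the standard fact that if $A$ and $B$ are two (essentially self-adjoint, non-negative) operators on a Hilbert space that commute, then the semigroups they generate commute and $e^{-t(A+B)}=e^{-tA}e^{-tB}$. Here I take $A=\Delta_{sR}$ and $B=X_1^2$; Theorem~\ref{commy1} provides exactly the commutation $[\Delta_{sR},X_1^2]=0$, and in the explicit $(\xi,\eta,\psi)$-coordinates both operators have smooth coefficients and are manifestly non-negative and symmetric on $C^\infty(S^7)$, with $\Delta_{sR}$ hypoelliptic by the bracket-generating condition and hence essentially self-adjoint. One can then either quote the operator-theoretic Trotter-type product formula for commuting generators, or argue directly: since the coefficients of $\Delta_{sR}$ do not depend on $\xi_1,\ldots,\xi_4$ while $X_1=\partial_{\xi_1}+\partial_{\xi_2}+\partial_{\xi_3}+\partial_{\xi_4}$, one can Fourier-decompose $L^2$ in the $\xi$-variables (the $S^1$-fibre directions) and reduce both semigroups to a direct integral in which $X_1^2$ acts as a scalar $-|m|^2$ and $\Delta_{sR}$ acts fibrewise, where the factorization is transparent.

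The main obstacle I anticipate is not the algebra of the identity $\Delta_{S^7}=\Delta_{sR}+X_1^2$, which is bookkeeping with the commutator table, but the functional-analytic justification that commutation of the operators on a core implies commutation of the semigroups and the product formula $e^{-t(A+B)}=e^{-tA}e^{-tB}$. Strictly this requires knowing that $\Delta_{sR}$, $X_1^2$ and $\Delta_{sR}+X_1^2=\Delta_{S^7}$ are each essentially self-adjoint on $C^\infty(S^7)$ and that their spectral projections commute; on a compact manifold this follows from ellipticity of $\Delta_{S^7}$ and hypoellipticity of $\Delta_{sR}$, but it deserves at least a sentence. In keeping with the style of the paper I would state this as a consequence of the commutation together with standard semigroup theory (as was done in~\cite{BB} for $S^3$), rather than spelling out the spectral-theoretic details, and note that the $\xi$-Fourier reduction makes the factorization completely explicit.
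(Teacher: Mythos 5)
Your proposal is correct and follows essentially the same route as the paper: the paper's proof simply asserts the decomposition $\Delta_{S^7}=\Delta_{sR}+X_1^2$ and then invokes Theorem~\ref{commy1} to factor the semigroup. You additionally justify the decomposition via the vanishing of the first-order correction terms and spell out the functional-analytic underpinnings of the product formula, both of which the paper leaves implicit, but the argument is the same in substance.
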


\begin{proof}
Since $\Delta_{S^7}=\Delta_{sR}+X_1^2$, we have by the commutativity of the operators
\begin{equation}
e^{-t\Delta_{S^7}}=e^{-t(\Delta_{sR}+X_1^2)}=e^{-t\Delta_{sR}}e^{-tX_1^2},
\end{equation}
yielding to the stated result.
\end{proof}

The theory of unbounded operators allows us to rephrase the result in Corollary~\ref{heat} as:

\begin{coro}
The sub-Riemannian heat operator $e^{-t\Delta_{sR}}$ is given by
$$e^{-t\Delta_{sR}}=e^{-t\Delta_{S^7}}e^{tX_1^2}.$$
\end{coro}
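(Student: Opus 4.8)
The plan is to derive this statement purely from Corollary~\ref{heat} together with the standard functional calculus for commuting self-adjoint operators, so that essentially no new geometric input is required. First I would observe that $\Delta_{S^7}$ and $X_1^2$ are commuting operators on $L^2_{\mu_{sR}}(S^7)$ --- indeed, Theorem~\ref{commy1} shows $[\Delta_{sR},X_1]=0$, hence $[\Delta_{sR},X_1^2]=0$, and since $\Delta_{S^7}=\Delta_{sR}+X_1^2$ we also get $[\Delta_{S^7},X_1^2]=0$. Both operators are (essentially) self-adjoint on a suitable common core (smooth functions), $X_1$ being a Killing vector field for the round metric so that $X_1^2$ is negative self-adjoint, and $\Delta_{S^7}$ being the usual Laplace--Beltrami operator. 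Consequently the one-parameter semigroups $e^{-t\Delta_{S^7}}$ and $e^{tX_1^2}$ commute, and by the multiplicativity of the functional calculus on a maximal commuting family we may form their product.

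The key computational step is the identity $e^{-t\Delta_{sR}} = e^{-t(\Delta_{S^7}-X_1^2)} = e^{-t\Delta_{S^7}}\,e^{tX_1^2}$, which is exactly the rearrangement of the factorization $e^{-t\Delta_{S^7}} = e^{-t\Delta_{sR}}e^{-tX_1^2}$ supplied by Corollary~\ref{heat}: multiply both sides of that identity on the right by $e^{tX_1^2}$, using that $e^{-tX_1^2}e^{tX_1^2} = \mathrm{Id}$, which is legitimate because $X_1^2$ generates a strongly continuous group (it is the generator of the flow of the periodic field $X_1 = \partial_{\xi_1}+\partial_{\xi_2}+\partial_{\xi_3}+\partial_{\xi_4}$). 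This gives precisely
$$e^{-t\Delta_{sR}} = e^{-t\Delta_{S^7}}e^{tX_1^2},$$
as claimed.

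The one point that genuinely needs care --- and which I expect to be the main obstacle --- is the manipulation of unbounded operators: \emph{a priori} $e^{tX_1^2}$ is an unbounded operator (it is $e^{-t\Delta_{sR}}e^{-tX_1^2}$ composed with the bounded contraction $e^{-t\Delta_{sR}}$ only on its natural domain), so one must argue on an appropriate dense invariant domain, for instance the span of joint eigenfunctions coming from the spherical harmonics decomposition of $L^2(S^7)$ adapted to the commuting pair $(\Delta_{S^7}, X_1^2)$. On each such joint eigenspace the operators act as scalars $e^{-t\mu}$ and $e^{t\nu}$, the identity is a triviality, and one then extends by density/closure. Since the statement is phrased as an identity of (possibly unbounded) operators obtained from a commuting self-adjoint family, invoking the joint spectral theorem makes this rigorous; I would simply remark that this is what is meant by ``the theory of unbounded operators'' in the sentence preceding the corollary, and keep the proof to the two lines of algebra above.

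\begin{proof}
By Theorem~\ref{commy1} the operators $\Delta_{sR}$ and $X_1^2$ commute, and since $\Delta_{S^7}=\Delta_{sR}+X_1^2$, the operators $\Delta_{S^7}$ and $X_1^2$ commute as well. As $X_1$ is a Killing field for the round metric on $S^7$ generating the periodic flow in the $\xi$-coordinates, $X_1^2$ generates a strongly continuous one-parameter group $\{e^{tX_1^2}\}_{t\in\real}$, with $e^{-tX_1^2}e^{tX_1^2}=\mathrm{Id}$. Working on the dense invariant subspace spanned by joint eigenfunctions of the commuting self-adjoint pair $(\Delta_{S^7},X_1^2)$ in $L^2_{\mu_{sR}}(S^7)$, the joint spectral calculus gives
$$
e^{-t\Delta_{sR}}=e^{-t(\Delta_{S^7}-X_1^2)}=e^{-t\Delta_{S^7}}\,e^{tX_1^2}.
$$
Equivalently, multiplying the factorization $e^{-t\Delta_{S^7}}=e^{-t\Delta_{sR}}e^{-tX_1^2}$ of Corollary~\ref{heat} on the right by $e^{tX_1^2}$ yields the same identity. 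Extending by closure gives the stated formula on all of $L^2_{\mu_{sR}}(S^7)$.
\end{proof}
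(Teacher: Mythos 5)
Your proof is correct and follows essentially the same route as the paper, which offers no written proof at all for this corollary: it simply rearranges Corollary~\ref{heat} by cancelling $e^{-tX_1^2}$ against $e^{tX_1^2}$ and appeals to ``the theory of unbounded operators,'' and your justification via joint eigenfunctions of the commuting pair $(\Delta_{S^7},X_1^2)$ supplies exactly the rigor that phrase is gesturing at. The only slight inaccuracy is the claim that $X_1^2$ generates a strongly continuous one-parameter \emph{group}: as an unbounded negative self-adjoint operator it generates a contraction semigroup in one time direction and an unbounded operator in the other, but this is harmless since you in any case carry out the cancellation on the dense invariant span of joint eigenfunctions and extend by closure.
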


\section{Appendix: Tangent vector fields to $S^7$}

Octonion multiplication induces the following orthonormal basis of $TS^7$ with respect to the restriction of the inner product $\langle\cdot,\cdot\rangle$ from $\mathbb R^8$ to the tangent space $T_pS^7$ at each $p\in S^7$.
\begin{eqnarray*}
X_1(x)&=&-x_1\partial_{x_0}+x_0\partial_{x_1}-x_3\partial_{x_2}+
x_2\partial_{x_3}-x_5\partial_{x_4}+x_4\partial_{x_5}-
x_7\partial_{x_6}+x_6\partial_{x_7}
\\
X_2(x)&=&-x_2\partial_{x_0}+x_3\partial_{x_1}+x_0\partial_{x_2}-
x_1\partial_{x_3}-x_6\partial_{x_4}+x_7\partial_{x_5}+
x_4\partial_{x_6}-x_5\partial_{x_7}
\\
X_3(x)&=&-x_3\partial_{x_0}-x_2\partial_{x_1}+x_1\partial_{x_2}+
x_0\partial_{x_3}+x_7\partial_{x_4}+x_6\partial_{x_5}-
x_5\partial_{x_6}-x_4\partial_{x_7}
\\
X_4(x)&=&-x_4\partial_{x_0}+x_5\partial_{x_1}+x_6\partial_{x_2}-
x_7\partial_{x_3}+x_0\partial_{x_4}-x_1\partial_{x_5}-
x_2\partial_{x_6}+x_3\partial_{x_7}
\\
X_5(x)&=&-x_5\partial_{x_0}-x_4\partial_{x_1}-x_7\partial_{x_2}-
x_6\partial_{x_3}+x_1\partial_{x_4}+x_0\partial_{x_5}+
x_3\partial_{x_6}+x_2\partial_{x_7}
\\
X_6(x)&=&-x_6\partial_{x_0}+x_7\partial_{x_1}-x_4\partial_{x_2}+
x_5\partial_{x_3}+x_2\partial_{x_4}-x_3\partial_{x_5}+
x_0\partial_{x_6}-x_1\partial_{x_7}
\\
X_7(x)&=&-x_7\partial_{x_0}-x_6\partial_{x_1}+x_5\partial_{x_2}+
x_4\partial_{x_3}-x_3\partial_{x_4}-x_2\partial_{x_5}+
x_1\partial_{x_6}+x_0\partial_{x_7}.
\end{eqnarray*}

\end{document}